\let\newpf\proof \let\proof\relax
\def\bm{\begin{matrix}}
\def\em{\end{matrix}}
\newcommand{\bt}{\begin{thm}}
\newcommand{\et}{\end{thm}}
\newcommand{\bl}{\begin{lemma}}
\newcommand{\el}{\end{lemma}}
\newcommand{\beq}{\begin{eqnarray}}
\newcommand{\eeq}{\end{eqnarray}}
\def\be{\begin{equation}}
\def\ee{\end{equation}}
\def\ba{{\begin{align}}}
\def\ea{{\end{align}}}
\def\0{{\mathbf 0}}
\newtheorem{thm}{Theorem}[section]
\newtheorem{lemma}[thm]{Lemma}
\theoremstyle{remark}
\newtheorem{rem}{Remark}[section]
\numberwithin{equation}{section}
\def \bn {\hfill \\ \smallskip\noindent}
\theoremstyle{definition}
\def\proof{\bn {\bf Proof.} }
\def\note#1
\newcommand{\M}{{\mathbb M}}
\newcommand{\Q}{{\mathbb Q}}
\newcommand{\R}{{\mathbb R}}
\newcommand{\T}{{\mathbb T}}
\newcommand{\Z}{{\mathbb Z}}
\def\B0{{\bold{0}}}
\def\Empty{}
\newcommand\oplabel[1]{
  \def\OpArg{#1} \ifx \OpArg\Empty {} \else
  	\label{#1}
  \fi}
\newcommand{\comm}[1]{}
\newcommand{\comment}[1]{}
\begin{document}

\title{Singular continuous spectrum for singular potentials}

\author{SVETLANA JITOMIRSKAYA  AND FAN YANG}

%\address{}

%\email{a}

%\address{a}
%\email{a}
%\thanks{S.J. is a 2014-15 Simons Fellow. This research was partially supported by  and .}

\begin{abstract}

We prove that Schr\"odinger operators with meromorphic potentials
$(H_{\alpha,\theta}u)_n=u_{n+1}+u_{n-1}+
\frac{g(\theta+n\alpha)}{f(\theta+n\alpha)} u_n$ have purely singular
continuous spectrum on the set $\{E: L(E)<\delta{(\alpha,\theta)}\}$,
where $\delta$ is an explicit function, and $L$ is the Lyapunov
exponent. This extends results of \cite{maryland} for the Maryland
model and of \cite{ayz1} for the almost Mathieu operator, to the general family of meromorphic potentials.

\end{abstract}

\maketitle

\section{Introduction}
We study operators of the form:
\begin{equation}\label{analyticmodel}
(H_{\alpha,\theta}u)_n=u_{n+1}+u_{n-1}+\frac{g(\theta+n\alpha)}{f(\theta+n\alpha)} u_n
\end{equation}
acting on $l^2(Z)$, where $\alpha \in \R\setminus \Q$ is the
frequency, $\T=\R/\Q$, $\theta \in \T$ is the phase,  $f$ is an
analytic function and $g$ is Lipshitz.
This class contains all meromorphic potentials and therefore both the almost
Mathieu 
($f\equiv 1$, $g=\lambda \cos 2\pi\theta$) and Maryland ($f=\sin 2\pi\theta$, $g=\lambda \cos
2\pi\theta$) families as particular cases. 

Let $\frac{p_n}{q_n}$ be the continued fraction approximants of $\alpha\in \R \setminus \Q$. 

Assume $g/f$ has $m$  poles, $m\ge 0$. We denote them by $\theta_i,
\;i=1,...,m,$ including multiplicities. We now define index $\delta$ as follows:

 \begin{equation} \label{delta}
  \delta( \alpha,\theta) = \limsup_{n\rightarrow \infty} \dfrac{\sum_{i=1}^m \ln\|q_n(\theta-\theta_i) \|_{\R / \Z}+ \ln q_{n+1}} {q_n}.
 \end{equation}   
where 
$\|x\|_{\R / \Z}=min_{l\in \Z}|x-l|$. Let $L(E)$ be the Lyapunov
exponent, see (\ref{L}). $L$ depends also on $\alpha$ but we suppress
it from the notation as we keep $\alpha$ fixed.
 
 Our main result is:
 \begin{thm}\label{sc}Let  $ \delta(\alpha,\theta) $ be as in (\ref{delta}). Then
 \begin{enumerate}
 \item $ H_{\alpha, \theta} $ %(\ref{analyticmodel}) 
has no eigenvalues on $\{ E:L(E)< \delta(\alpha, \theta)\}$. 
 \item If $L(E)>0$ for a.e. $E$ (in particular, if $m>0$), then $ H_{\alpha, \theta} $ has purely singular continuous spectrum on
$\{ E:L(E)< \delta(\alpha, \theta)\}$.
 \end{enumerate}
\end{thm}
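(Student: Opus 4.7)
The plan is to reduce part (2) to part (1) by standard spectral arguments: positivity of $L(E)$ almost everywhere, via the Ishii--Pastur inclusion $\sigma_{ac}\subset\{E:L(E)=0\}$, rules out absolutely continuous spectrum on $\{E:L(E)<\delta(\alpha,\theta)\}$, so once part (1) excludes eigenvalues there, only singular continuous spectrum can remain. The substantive content is part (1), which I would prove by a Gordon-type argument adapted to the meromorphic setting.

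To set up the Gordon strategy, let $u$ be a formal eigenfunction at energy $E$, and write $\binom{u_{n+1}}{u_n}=M_n(\theta;E)\binom{u_1}{u_0}$ for the standard transfer matrix cocycle. Along a subsequence of continued fraction denominators $q_n$ realizing the $\limsup$ in (\ref{delta}), the goal is to compare $M_{q_n}(\theta;E)$ with $M_{q_n}(\theta-q_n\alpha;E)$ and establish
\[
\bigl\|M_{q_n}(\theta;E) - M_{q_n}(\theta-q_n\alpha;E)\bigr\| \leq \exp\bigl(-\delta(\alpha,\theta)\,q_n + o(q_n)\bigr).
\]
Here the exponent collects the Diophantine contribution $\ln q_{n+1}$ coming from $\|q_n\alpha\|_{\R/\Z}<1/q_{n+1}$ together with the pole-avoidance contribution $\sum_i \ln\|q_n(\theta-\theta_i)\|_{\R/\Z}$ coming from the closest approach of the orbit $\{\theta+j\alpha\}_{0\le j<q_n}$ to each pole $\theta_i$ of $g/f$.

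Granted this estimate, the conclusion follows from the classical Cayley--Hamilton trick. Since $\det M_{q_n}=1$, one has $M_{q_n}(\theta)^2=\tr(M_{q_n}(\theta))\,M_{q_n}(\theta)-I$, and the approximation yields
\[
M_{2q_n}(\theta) = M_{q_n}(\theta+q_n\alpha)\,M_{q_n}(\theta) = \tr(M_{q_n}(\theta))\,M_{q_n}(\theta) - I + \mathcal{E},
\]
with $\|\mathcal{E}\|$ controlled by the displayed bound times $\|M_{q_n}(\theta)\|\leq e^{(L(E)+o(1))q_n}$, which is $o(1)$ because $L(E)<\delta(\alpha,\theta)$. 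The three-term relation $u_{2q_n}\approx \tr(M_{q_n})\,u_{q_n}-u_0$ that results, combined with its analog on the negative half-line (applied at phase $\theta-q_n\alpha$), prevents simultaneous $\ell^2$ decay of $u$ at $\pm\infty$ and so rules out eigenvalues on the set in question.

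The main obstacle is the displayed approximation itself, which must be proved uniformly through the poles of $g/f$. The naive pointwise Lipschitz bound on $|V_{j+q_n}-V_j|$ diverges as the orbit approaches a pole, so a direct estimate is not available. The approach I would take is multiplicative: factor the scalars $f(\theta+j\alpha)$ out of the transfer matrices and reduce the comparison to controlling Birkhoff-type sums $\sum_{j=0}^{q_n-1}\bigl(\log|f(\theta+(j+q_n)\alpha)|-\log|f(\theta+j\alpha)|\bigr)$ together with the smooth Lipschitz contributions from $g$. Using that for $\alpha$ near $p_n/q_n$ the orbit $\{\theta+j\alpha\}_{0\le j<q_n}$ has an almost three-distance-theorem structure with minimal distance to each $\theta_i$ of order $\|q_n(\theta-\theta_i)\|_{\R/\Z}/q_n$, one matches the logarithmic singularity of $f$ at each pole against the corresponding term in (\ref{delta}). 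Obtaining the exponential rate precisely, so that the threshold $L(E)<\delta(\alpha,\theta)$ is sharp, is the technical heart of the proof.
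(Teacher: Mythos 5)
Your overall architecture matches the paper's: part (2) is reduced to part (1) via a.e.\ positivity of $L$ (the paper cites \cite{simonspencer} rather than Ishii--Pastur because the potential is unbounded, but that is minor), and part (1) is a Gordon two-block/Cayley--Hamilton argument along the subsequence of $q_n$ realizing the $\limsup$ in (\ref{delta}), with the arithmetic input being a lower bound on $\prod_{j=0}^{q_n-1}|f(\theta+j\alpha)|$ coming from the closest approach of the orbit to each pole. Your three-distance heuristic for that closest approach is exactly the paper's Lemma \ref{minimal}, combined with the sine-product estimate of Lemma \ref{lana}.

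There is, however, a genuine gap in the key quantitative step, and it sits exactly where the sharpness of the threshold $L(E)<\delta$ is decided. You formulate the Gordon estimate in operator norm, claiming $\|M_{q_n}(\theta)-M_{q_n}(\theta-q_n\alpha)\|\le e^{-\delta q_n+o(q_n)}$ together with $\|M_{q_n}(\theta)\|\le e^{(L+o(1))q_n}$. Neither holds here: the cocycle $A$ is not continuous, so the upper-semicontinuity bound (Lemma \ref{LcontrolA}) does not apply to it, and at phases whose orbit passes within $e^{-cq_n}$ of a pole one has $\|A_{q_n}(\theta)\|\gg e^{(L+o(1))q_n}$; correspondingly, the telescoped operator-norm difference carries, beyond one full period of $1/|f|$'s, extra singular factors $1/|f(\theta+j\alpha)|$ and $1/|f(\theta+(j-q_n)\alpha)|$ that can be exponentially large. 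Even discarding the poles ($m=0$), the correct operator-norm bound on the difference is $e^{(L-\delta)q_n+o(q_n)}$, not $e^{-\delta q_n+o(q_n)}$ --- each telescoped term has transfer-matrix blocks of total length $q_n-1$ surrounding the Lipschitz increment of size $1/q_{n+1}$ --- and then your step ``$\|\mathcal{E}\|\le(\text{difference})\times\|M_{q_n}(\theta)\|$'' yields only $e^{(2L-\delta)q_n}$, i.e.\ the theorem on $\{E:2L(E)<\delta(\alpha,\theta)\}$. The paper's fix, which your proposal is missing, is never to take operator norms of the right-hand blocks: the telescoped identity is applied directly to the vector $\left(\begin{matrix}\varphi_0\\ \varphi_{-1}\end{matrix}\right)$ of the putative decaying eigenfunction, so every right-hand product collapses to $\left(\begin{matrix}\varphi_k\\ \varphi_{k-1}\end{matrix}\right)$, bounded by a constant, and the singular factor $1/f_{j-q_n}$ in the middle term is likewise absorbed by letting $A^{-1}(\theta+(j-q_n)\alpha)$ act on the solution. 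Only then does exactly one period $\prod_{l=0}^{q_n-1}|f_l|$ survive in the denominator (controlled by Lemma \ref{f}) and only one factor $e^{(L+\varepsilon)q_n}$ (from the left-hand blocks of the regular part $F$) in the numerator, giving the sharp $e^{q_n(L-\delta+4\varepsilon)}$ of Lemma \ref{A}.
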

 {\bf Remark.} Since absence of absolutely continuous spectrum follows
 from a.e.positivity of the Lyapunov exponents and holds
 for all unbounded potentials \cite{simonspencer}, part (2) immediately follows
 from part (1), on which we therefore concentrate.

Recently, there has been an increased interest in obtaining arithmetic
conditions (in contrast to a.e. statements) for various quasiperiodic spectral
results. In particular, there have been remarkable advances in the
theory of the almost Mathieu operator

\begin{equation}\label{amo}
(H_{\lambda,\alpha,\theta}u)_n=u_{n+1}+u_{n-1}+\lambda \cos 2\pi(\theta+n\alpha)u_n
\end{equation}
(see e.g. \cite{lastrev,jmrev} for the review and background in
physics). %  For $f=\sin 2\pi\theta$, $g=\lambda \cos
% 2\pi\theta$ we obtain the Maryland model. 
Define
\begin{equation}
\beta=\beta(\alpha)=\limsup_{n\rightarrow\infty}\frac{\ln q_{n+1}}{q_n},
\end{equation} which describes how Liouvillian  $\alpha$ is. We say
that $\alpha$ is Diophantine if $\beta(\alpha)=0.$ Note that for almost
 every phase $\theta$ (only depends on $\alpha$) we have $\delta(\alpha,\theta)=\beta(\alpha).$

% In our paper, we deal with general potential of such form:
%  $\frac{g}{f}$, in which $f$ and $g$ are both analytic functions.  

% If $\lambda>e^{\frac{16\beta}{9}}$,  Avila-Jitomirskaya \cite{AJ1}
% showed that $H_{\lambda, \alpha, \theta}$ has almost localization
% for a.e. $\theta \in \R$. If $1<\lambda<e^{\frac{\beta}{2}}$, using
% classical Gordon's lemma \cite{Gordon_1976}  and the exact formula
% for Lyapunov exponent \cite{BourgainJitomirskaya_2002}, one can get
% that $H_{\lambda, \alpha, \theta}$ has purely singular spectrum for
% any $\theta \in \R$.  

It was conjectured  in 1994 \cite{1994} that
$\lambda=e^\beta$ is the phase transition point from singular
continuous spectrum to pure point spectrum for $\alpha$-Diophantine
$\theta$ (and that the transition is at larger $\lambda$ for non-$\alpha$-Diophantine
$\theta$). The history of partial results towards this conjecture
include \cite{j,AJ1}.
Recently, Avila, You and Zhou  proved \cite{ayz1}
\begin{thm}\label{lanaconjecture} 
For $\lambda>e^\beta$, the spectrum is pure point with exponentially decaying eigenfunctions for a.e. $\theta,$ and for $1<\lambda<e^\beta$, the spectrum is purely singular continuous for all $\theta$.
\end{thm}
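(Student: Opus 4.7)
The proof naturally splits along the phase transition $\lambda=e^\beta$, and the two halves require very different machinery.

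Part (2) --- purely singular continuous spectrum for $1<\lambda<e^\beta$ --- I would read off directly from Theorem~\ref{sc}. The almost Mathieu potential has no poles, so $m=0$ and the index in (\ref{delta}) collapses to $\delta(\alpha,\theta)=\beta(\alpha)$ for every $\theta$. By Herman's subharmonic bound together with the Bourgain--Jitomirskaya identity, the Lyapunov exponent on the spectrum equals $L(E)=\max\{0,\ln(\lambda/2)\}$, which is strictly smaller than $\beta$ precisely when $\lambda<e^\beta$. Hence Theorem~\ref{sc}(1) rules out eigenvalues for every $\theta$. Absence of absolutely continuous spectrum in the relevant range follows from Kotani/Last--Simon arguments (positivity of $L$ on the spectrum in the supercritical window, and Gordon-type arguments at criticality), so the spectrum is purely singular continuous.

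Part (1) --- Anderson localization for $\lambda>e^\beta$ and a.e.\ $\theta$ --- I would attack via Aubry duality combined with quantitative reducibility of the dual cocycle. Under the Fourier-type transform $u_n\mapsto \widehat u(x)=\sum_n u_n e^{2\pi i n x}$, a normalized $\ell^2$ eigenfunction of $H_{\lambda,\alpha,\theta}$ at energy $E$ becomes an analytic quasi-periodic Bloch wave for the Aubry-dual almost Mathieu operator (with inverse coupling of order $1/\lambda$) at the same $E$, and conversely analytic Bloch waves for the dual produce exponentially localized eigenfunctions of the primal. In the regime $\lambda>e^\beta$ the dual coupling lies deep in the subcritical region, so by Avila's Almost Reducibility Theorem the dual Schr\"odinger cocycle is analytically almost reducible over the spectrum. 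The task is then to upgrade this to \emph{full} analytic reducibility with exponential convergence for a.e.\ spectral parameter, since dualizing such a reducibility produces a complete family of exponentially decaying eigenfunctions whose eigenvalues carry full spectral measure.

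The main obstacle, and the core of the argument, is running the KAM-type reduction when $\alpha$ is Liouville (so $\beta>0$), a regime in which the classical Diophantine small-divisor hypothesis fails. The delicate bookkeeping concerns three scales at each resonance $q_n$: the small divisor $\|q_n\alpha\|^{-1}\sim e^{\beta q_n}$, the analytic strip width inherited from the previous step, and the coupling-induced smallness of order $(2/\lambda)^{q_n}=e^{-q_n\ln(\lambda/2)}$. The hypothesis $\lambda>e^\beta$ is exactly what forces the last of these quantities to beat the first, so the perturbation remains analytically small after each conjugation. Following the strategy of Avila--You--Zhou I would alternate standard KAM conjugations at non-resonant scales with explicit rotation-type conjugacies at resonant scales $q_n$, summing the errors geometrically to obtain a limiting analytic reducibility. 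Transferring back via Aubry duality then requires a Jitomirskaya--Simon-type non-resonance argument on $\theta$ to exclude a measure-zero set of bad phases, yielding exponential localization for Lebesgue-a.e.\ $\theta$.
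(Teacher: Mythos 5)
You should first be aware that the paper does not prove this statement at all: Theorem~\ref{lanaconjecture} is the Avila--You--Zhou theorem, quoted from \cite{ayz1} as background, and the only piece of it that the paper's own machinery reaches is the singular continuous half, which the authors observe is recovered by specializing their Theorem~\ref{sc} to $f\equiv 1$, $g=\lambda\cos 2\pi\theta$. So the comparison is against that remark plus the cited literature, not against an internal proof.

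Your Part (2) is exactly that specialization, and it is the right argument: with $m=0$ the sum in (\ref{delta}) is empty, so $\delta(\alpha,\theta)=\beta(\alpha)$ for \emph{every} $\theta$, the Lyapunov exponent is constant on the spectrum (Bourgain--Jitomirskaya \cite{BourgainJitomirskaya_2002}), Theorem~\ref{sc}(1) then excludes eigenvalues for all $\theta$, and Kotani theory (positive $L$) excludes absolutely continuous spectrum. There is, however, a normalization muddle you should fix: you compute $L=\max\{0,\ln(\lambda/2)\}$, which corresponds to the paper's convention (\ref{amo}) with potential $\lambda\cos$, but then $L<\beta$ holds iff $\lambda<2e^\beta$, not $\lambda<e^\beta$ as you assert; moreover under that convention the theorem as stated would be false for $1<\lambda<2$ (that regime is absolutely continuous, with $L=0$ on the spectrum, and Theorem~\ref{sc}(2) would not even apply). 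The thresholds $1$ and $e^\beta$ presuppose the convention of \cite{ayz1}, potential $2\lambda\cos 2\pi(\theta+n\alpha)$, under which $L=\max\{0,\ln\lambda\}$ on the spectrum; adopt that convention consistently and your Part (2) is correct and matches the route the paper itself indicates.

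Part (1) is where the proposal stops being a proof. What you describe --- Aubry duality, almost reducibility of the subcritical dual cocycle, resonant conjugations at the scales $q_n$ where $\lambda>e^\beta$ lets the coupling-induced smallness beat the small divisors of size $e^{\beta q_n}$, then a Jitomirskaya--Simon-type exclusion \cite{js} of a null set of phases --- is indeed an accurate identification of the strategy of \cite{ayz1}. But every step is asserted rather than executed, and those steps are the entire technical content of that paper: the quantitative almost-reducibility statement valid for Liouville $\alpha$, the upgrade to full reducibility for spectrally almost every energy, the control of analytic radii through infinitely many resonant conjugation steps, and the duality argument converting reducibility of the dual cocycle into a complete family of exponentially decaying eigenfunctions carrying full spectral measure. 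Judged as a proof this half has a genuine gap at each quantitative stage and could not be closed without essentially reproducing \cite{ayz1}; judged as a map of the known proof, it is correct. Since the present paper's contribution (Theorem~\ref{sc}) only yields the singular continuous direction --- and, per Theorems \ref{liu1} and \ref{liu2}, is not even sharp for the almost Mathieu operator beyond $\alpha$-Diophantine phases --- no argument within this paper can supply your Part (1).
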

{\bf Remark.} The spectrum is known to be absolutely continuous for
all $\alpha,\theta$ for $\lambda<1$ (the final result in \cite{Avila_2008}).

A fully arithmetic version of the localization statement
\begin{thm}\label{liu1} 
For $\lambda>e^\beta$, the spectrum is pure point with exponential decaying eigenfunctions for $\alpha$-Diophantine $\theta$ 
\end{thm}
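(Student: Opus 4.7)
The target is arithmetic Anderson localization for the almost Mathieu operator (\ref{amo}) at supercritical coupling $\lambda>e^\beta$ and every $\alpha$-Diophantine $\theta$, meaning there exist $\kappa,\tau>0$ with $\|2\theta+k\alpha\|_{\R/\Z}\ge \kappa(1+|k|)^{-\tau}$ for all $k\in\Z$. By the Schnol--Simon theorem, the point spectrum of $H_{\lambda,\alpha,\theta}$ coincides with the set of energies $E$ admitting a polynomially bounded generalized eigenfunction $u$, so it suffices to show every such $u$ in fact decays exponentially.

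The plan is to follow the non-perturbative block-resolvent scheme developed for AMO by Jitomirskaya. For a finite interval $I=[a,b]\subset\Z$ with Dirichlet Green's function $G_I(x,y;E)$, Cramer's rule gives
\[
|G_I(x,y;E)|=\frac{|P_{[a,x-1]}(E)|\,|P_{[y+1,b]}(E)|}{|P_I(E)|},
\]
where $P_J$ is the characteristic polynomial of $H_J$. One combines the Thouless upper bound $|P_J(E)|\le e^{(L(E)+\varepsilon)|J|}$, uniform in $J$ for $\alpha$ irrational, with the matching lower bound $|P_I(E)|\ge e^{(L(E)-\varepsilon)|I|}$ that holds on ``regular'' intervals $I$. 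The Poisson formula $u(x)=-G_I(x,a;E)u(a-1)-G_I(x,b;E)u(b+1)$ then converts polynomial growth of $u$ into exponential decay at rate $L(E)-O(\varepsilon)$ at any $x$ deep inside a regular $I$.

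The heart of the argument is to cover every $x$ with $|x|$ large by such a regular interval. A bad interval corresponds to a near-zero of $P_I(E)$, which for AMO arises from a reflection-symmetry center $n_0$ with $\theta+n_0\alpha\equiv\tfrac12\pmod 1$: two such centers differ by an element of the orbit $\alpha\Z-2\theta\pmod 1$, so the $\alpha$-Diophantine bound $\|2\theta+k\alpha\|\ge \kappa(|k|+1)^{-\tau}$ quantitatively limits their density at every scale. The inequality $\lambda>e^\beta$ enters when comparing the Lyapunov rate $L(E)=\ln|\lambda|$ (which is positive on the spectrum for $\lambda>1$ in the convention of (\ref{amo})) against the worst resonant amplification $\ln q_{n+1}/q_n\le \beta+o(1)$: the surplus $\ln\lambda-\beta>0$ is what closes the multi-scale induction over the continued-fraction denominators $q_n$.

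The main obstacle will be this multi-scale resonance step. Unlike the a.e.-$\theta$ statement \thmref{lanaconjecture}, no phase may be discarded, so the induction must use the $\alpha$-Diophantine bound quantitatively: the polynomial loss $(|k|+1)^\tau$ at each scale must be absorbed by the exponential surplus $e^{(\ln\lambda-\beta)q_n}$ coming from the coupling assumption. Verifying that this tradeoff closes cleanly, so that resonant reflections of $u$ do not accumulate between successive scales $q_n$ and $q_{n+1}$ and the final decay rate is the optimal $\ln\lambda-\beta-\varepsilon$, is where the technical core of the proof should lie.
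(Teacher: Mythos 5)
First, a point of comparison: the paper you were given does not prove \thmref{liu1} at all --- it is quoted as an external result, ``established recently in \cite{jl1}'', and no argument for it appears in this text. So there is no internal proof to match your proposal against; the question is only whether your sketch would stand on its own. It would not, because it stops exactly where the actual difficulty begins. Everything you describe --- Schnol reduction, Cramer's rule for $G_I$, the Thouless/Lagrange-interpolation upper and lower bounds on $|P_J(E)|$, regular intervals, and the Poisson formula converting polynomial boundedness into exponential decay --- is the classical non-perturbative scheme that already yields localization for \emph{Diophantine} $\alpha$ (Jitomirskaya 1999, \cite{j}). You then write that verifying the multi-scale tradeoff between $\ln\lambda-\beta$ and the resonances ``is where the technical core of the proof should lie,'' which is an accurate self-assessment: that core is precisely what is missing, and it is not a routine bookkeeping step.

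Concretely, your resonance analysis misidentifies the dominant obstruction. You attribute bad intervals to reflection-symmetry centers with $\theta+n_0\alpha\equiv\tfrac12$, i.e.\ to smallness of $\|2\theta+k\alpha\|$; but for $\alpha$-Diophantine $\theta$ those phase resonances are exactly the ones that are already under control, and the reason \thmref{liu1} resisted proof for two decades is the \emph{frequency} resonances: when $\beta(\alpha)>0$, the denominators satisfy $\|q_n\alpha\|\le q_{n+1}^{-1}\le e^{-(\beta-o(1))q_n}$, so the sampling points $\cos 2\pi(\theta+j\alpha)$ over blocks separated by $q_n$ nearly coincide and the lower bound $|P_I(E)|\ge e^{(L-\varepsilon)|I|}$ fails on intervals of length comparable to $q_n$ near the sites $kq_n$, regardless of $\theta$. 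A generalized eigenfunction genuinely does not decay at rate $L-O(\varepsilon)$ through these scales --- it has resonant bumps at $\pm q_n,\pm 2q_n,\dots$ --- so your claimed conclusion that every large $x$ lies deep inside a regular interval is false as stated, and the single-pass block-resolvent argument does not close. The content of \cite{jl1} is a new mechanism for tracking how the solution regenerates between consecutive resonant scales $q_n$ and $q_{n+1}$ (yielding the hierarchical structure of eigenfunctions), with the surplus $\ln\lambda-\beta>0$ entering through that two-scale analysis rather than through a polynomial-versus-exponential absorption of the $\alpha$-Diophantine constant as you propose. Your outline names the right ingredients but omits the idea that makes the theorem true.
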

was established recently in \cite{jl1}.

Define also
\begin{equation}\label{G.delta}
\gamma=\gamma(\alpha,\theta)=\limsup_{n \to \infty} \dfrac{-\ln \vert\vert 2 \theta+n \alpha \vert\vert_{\R/\Z} }{\vert n \vert} ~\mbox{.}
\end{equation} 
We say that  $\theta$ is $\alpha$-Diophantine if $\gamma(\alpha,\theta)=0.$

It was also conjectured  in \cite{1994,Jitomirskaya_review_2007} that
$\lambda=e^\gamma$ is the phase transition point from singular
continuous spectrum to pure point spectrum for Diophantine
$\alpha$ (and that the transition is at larger $\lambda$ for non-Diophantine
$\alpha$). Partial results towards this conjecture include \cite{js,j}.
The conjecture was recently fully established in \cite{jl2}:
\begin{thm}\label{liu2} 
For $\lambda>e^{\gamma(\alpha,\theta)}$, the spectrum is pure point with exponentially
decaying eigenfunctions for Diophantine $\alpha,$ and  for
$\lambda<e^{\gamma(\alpha,\theta)}$, the spectrum is singular
continuous for all $\alpha.$
\end{thm}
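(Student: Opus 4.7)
The plan is to prove the two halves of Theorem \ref{liu2} by different but related arguments: part (b) by an Avila-You-Zhou-style contradiction via reflection, and part (a) by a multi-scale block construction that tracks phase resonances.

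For part (b), specialize (\ref{analyticmodel}) to $f\equiv 1$, $g(\theta)=\lambda\cos 2\pi\theta$; Herman's estimate gives $L(E)=\ln\lambda$ on the spectrum, so the hypothesis reads $L(E)<\gamma(\al,\theta)$. Assume for contradiction an $l^2$ eigenfunction $u$ normalized by $|u_0|^2+|u_1|^2=1$. The cosine identity
\[
V(\theta+(n+k)\al)-V(\theta-k\al)=-2\lambda\sin\pi(2\theta+n\al)\,\sin\pi(n+2k)\al,
\]
bounded by $2\pi\lambda\|2\theta+n\al\|_{\R/\Z}$, implies that along the subsequence $n_j$ realizing $\gamma$, the transfer-matrix product over an $n_j$-length window starting at position $n_j$ is $e^{-(\gamma-\eps)n_j+o(n_j)}$-close to the transfer-matrix product at phase $-\theta$. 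By the reflection symmetry of the almost Mathieu operator, this identifies $u_{n_j+k}$ with $u_{-k}$ up to exponentially small error. A Cayley-Hamilton identity for $A_E^{(n_j)}(\theta)\in\SL(2,\R)$ combined with the upper transfer-matrix bound $\|A_E^{(n)}(\theta)\|\le e^{n\ln\lambda+o(n)}$ converts this identification into an inequality of the form
\[
|u_0|^2+|u_1|^2\le e^{n_j(\ln\lambda-\gamma)+o(n_j)}+o(1),
\]
whose right-hand side vanishes under $L(E)<\gamma$, a contradiction. Absence of a.c.\ spectrum is automatic from $L(E)>0$.

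For part (a), I would run an inductive block scheme adapted to phase resonances. Start with a polynomially bounded solution $u$ (Schnol). The phase-resonance scales are the integers $n_j$ at which $\|2\theta+n_j\al\|_{\R/\Z}\asymp e^{-\gamma n_j+o(n_j)}$. On regular intervals between resonances, Avila's large-deviation theorem for analytic cocycles, together with the Diophantine condition on $\al$, yields exponentially small Dirichlet Green's functions at rate $L(E)=\ln\lambda$, forcing $u$ to decay at this rate. Near each resonance $n_j$, the Green's function can be enhanced by the small-divisor factor $\|2\theta+n_j\al\|^{-1}\asymp e^{\gamma n_j+o(n_j)}$, but since $\ln\lambda-\gamma>0$ the net decay across each resonance is still positive. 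Iterating gives $u\in l^2$ with exponential rate $\ln\lambda-\gamma$; purity and completeness of the point spectrum follow by a standard spectral-averaging argument.

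The principal obstacle lies in part (a): controlling clusters of phase resonances, which can occur when $2\theta$ is anomalously well approximated by $\Z+n\al$ on many consecutive scales. This requires a hierarchical block construction that tracks the peaks of $u$, extending the technology of \cite{jl1} developed for Theorem \ref{liu1} (where only the $\al$-Diophantine condition appeared). In part (b), the subtler issue is upgrading the cosine identity to a quantitative transfer-matrix comparison at precision $e^{-(\gamma-o(1))n_j}$: a naive Lipschitz bound would lose a factor of $e^{n_j L(E)}$ through iteration, and absorbing this loss into the $o(n_j)$ error demands the full strength of Avila's large-deviation theorem together with an Avalanche-Principle-type refinement.
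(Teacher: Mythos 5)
This statement is not proved in the paper at all: it is Theorem~\ref{liu2}, quoted verbatim from the preprint \cite{jl2} as background for the introduction. There is therefore no proof in the paper to compare yours against, and what you have written should be judged as a standalone argument for a deep external result. As such, it is an outline of intentions rather than a proof, and it has genuine gaps.

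The most serious gap is part (a). The localization half of Theorem~\ref{liu2} is the entire content of \cite{jl2} (a long, delicate argument tracking the interaction between the phase resonances $\|2\theta+n\alpha\|\approx e^{-\gamma n}$ and the Green's function estimates); compressing it to ``run an inductive block scheme, enhance the Green's function by the small-divisor factor near each resonance, iterate'' does not engage with the actual difficulties, which you yourself flag (clusters of resonances) but do not resolve. Moreover, your concluding step --- ``purity and completeness of the point spectrum follow by a standard spectral-averaging argument'' --- is wrong for a statement of this type: spectral averaging produces almost-everywhere-in-$\theta$ conclusions, whereas Theorem~\ref{liu2} is an arithmetic statement for a \emph{fixed} phase. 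The correct route (and the one consistent with your own starting point) is Schnol: show every polynomially bounded generalized eigenfunction is $\ell^2$, which yields pure point spectrum directly with no averaging. In part (b) your outline is closer to a workable argument (a Jitomirskaya--Simon/Gordon-type reflection argument using near-palindromicity of the cosine potential when $\|2\theta+n\alpha\|$ is small), but the claimed precision $e^{-(\gamma-\eps)n_j+o(n_j)}$ for the transfer-matrix comparison is both too strong and unnecessary: telescoping loses a factor $e^{n_jL(E)}$, giving an error of size $e^{(L-\gamma+\eps)n_j}$, which is exactly what one needs under the hypothesis $L<\gamma$ --- compare the bound $e^{q_{n_i}(L(E)-\delta+4\varepsilon)}$ in Lemma~\ref{A} of the present paper. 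Your final paragraph's appeal to the avalanche principle to repair a loss that does not need repairing suggests the mechanism of the Gordon-type estimate has not been fully internalized; and obtaining the \emph{sharp} threshold $\lambda<e^{\gamma}$ (rather than a constant-factor-worse one, as in the classical Jitomirskaya--Simon palindrome argument) still requires care that the sketch does not supply.
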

Therefore, for the almost Mathieu operator the precise transition from
pure point to singular continuous spectrum is understood for 
either Diophantine $\alpha$ and all $\theta$ or for all $\alpha$ and
$\alpha$-Diophantine $\theta,$ but not yet for all parameters.

Another case with a significant recent  arithmetic results is the
Maryland model
\begin{equation}\label{maryland}
 (H_{\lambda,\alpha,\theta}u)_n=u_{n+1}+u_{n-1}+\lambda \tan \pi(\theta+n\alpha)u_n
\end{equation}
It is the prototypical operator of form (\ref{analyticmodel}).
%Taking $g=\sin \pi\theta$, $f=\lambda \cos\pi\theta$, we obtain the Maryland model:
This model was proposed by Grempel, Fisherman, and Prange
\cite{grempel1982localization} as a linear version of the quantum
kicked rotor. %It exhibits Anderson localization in momentum space.
 It
is an exactly solvable example of the family of incommensurate models,
thus attracting continuing interest in physics,
e.g. % \cite{berry} \cite{fishman2010} 
\cite{GKDS2014}.  The complete
description of spectral transitions for the Maryland model (depending
on arithmetic properties of all parameters)
% In spectral theory, it is quite popular model \cite{CyconFroeseKirschSimon_book_1987} as a non-trivial almost periodic operator with abundance of unusual features.
%  For Diophantine and Liouville ($a.e. \theta$) frequency, in\cite{fp1984} \cite{simm}, they gave a description of spectrum. Moreover, Simon made in-depth analysis of 
%   arithmetics of $\alpha$. He showed that $h$ has purely singular continuous spectrum on ${E:\tilde L_\lambda(E)<\frac{1}{2}\beta(\theta)}$,For a.e. $\theta$, in which $\tilde L_\lambda(E)>L_\lambda(E)$. 
was given recently in  \cite{maryland}.

Namely, an index $\delta(\alpha,\theta) \in [-\infty,\infty]$ was  introduced
in \cite{maryland}:
 \begin{equation}\label{marylanddelta}
 \delta(\alpha,\theta)=\limsup_{n \rightarrow \infty} \frac{\ln \|q_n(\theta-\frac{1}{2})\|_{\R / \Z}+\ln q_{n+1}}{q_n}
 \end{equation}

 The main result of \cite{maryland} regarding the singular continuous part is:
 
 \begin{thm}\label{jlsingular}\cite{maryland}
 $H_{\alpha,\theta}$  has purely singular continuous spectrum on 
$\{E: L(E)<\delta(\alpha,\theta) \}$. 
 \end{thm}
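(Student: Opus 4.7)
By the remark following Theorem \ref{sc}, it suffices to prove part (1): absence of eigenvalues on $\{E: L(E) < \delta(\alpha,\theta)\}$. My plan is a meromorphic Gordon-type argument, generalizing the single-pole Maryland strategy of \cite{maryland} to handle the multi-pole structure of $\delta$. Fix such an $E$ and suppose for contradiction that $H_{\alpha,\theta}u = Eu$ with $u \in \ell^2(\Z)$; translating the index we may assume $|u(0)|^2 + |u(1)|^2 = 1$. Pick $\epsilon_0 > 0$ with $L(E) + 3\epsilon_0 < \delta(\alpha,\theta)$ and extract a subsequence $\{n_k\}$ realizing the $\limsup$ in \eqref{delta}, so that for $k$ large
\[
\log q_{n_k+1} + \sum_{i=1}^m \log \|q_{n_k}(\theta - \theta_i)\|_{\R/\Z} \;\ge\; (L(E) + 2\epsilon_0)\, q_{n_k}.
\]
Write $M_k := A_{q_{n_k}}^{(\theta)}(E)$, $M_k^+ := A_{q_{n_k}}^{(\theta + q_{n_k}\alpha)}(E)$, and $M_k^- := A_{q_{n_k}}^{(\theta - q_{n_k}\alpha)}(E)$.

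The pivotal estimate is
\[
\|M_k^{\pm} - M_k\| \;\le\; e^{(L(E) + \epsilon_0)\,q_{n_k}} \cdot \frac{1}{q_{n_k+1}} \cdot \prod_{i=1}^m \|q_{n_k}(\theta - \theta_i)\|_{\R/\Z}^{-1},
\]
whose right-hand side is $O(e^{-\epsilon_0 q_{n_k}})$ by the subsequence choice. To prove it, one telescopes $M_k^{\pm} - M_k$ into a sum of products of transfer steps with a single slot perturbed. The slot difference is $v(\theta + (j+q_{n_k})\alpha) - v(\theta + j\alpha)$, and since $q_{n_k}\alpha$ differs from an integer by $\eta_k$ with $|\eta_k| \asymp 1/q_{n_k+1}$, the pointwise bound
\[
|v(x + \eta_k) - v(x)| \;\le\; C\,|\eta_k|\,\prod_{i=1}^m \max(\|x - \theta_i\|_{\R/\Z},\,|\eta_k|)^{-(\mu_i + 1)}
\]
(with $\mu_i$ the multiplicity of $\theta_i$) controls each slot. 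Two auxiliary ingredients enter: the Three Distance Theorem, which organizes $\{\theta + j\alpha:0 \le j < q_{n_k}\}$ into intervals of size $\asymp 1/q_{n_k}$ and thus controls the number of orbit points within any distance of each pole, and an upper bound $\le e^{(L(E) + \epsilon_0) q_{n_k}}$ on the norms of the sandwiching sub-products, valid for large $k$ by upper semicontinuity of finite-step Lyapunov averages after excluding phases whose forward orbit lands on a pole.

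Given this estimate, the classical Gordon trick concludes. Writing $w_j$ for the column vector with entries $u(j+1), u(j)$, so that $w_{q_{n_k}} = M_k w_0$ and $\det M_k = 1$, the Cayley--Hamilton relation $M_k^{-1} = (\tr M_k)I - M_k$ yields the exact periodic-case formulas
\[
w_{2q_{n_k}} = (\tr M_k)\,w_{q_{n_k}} - w_0, \qquad w_{-q_{n_k}} = (\tr M_k)\,w_0 - w_{q_{n_k}}.
\]
In our setting these hold up to errors of norms $\|M_k^+ - M_k\|\cdot|w_{q_{n_k}}|$ and $3\|M_k^- - M_k\|\cdot|w_0|$, each $O(e^{-\epsilon_0 q_{n_k}}) = o(1)$ (using the elementary inequality $\|A^{-1} - B^{-1}\| \le 3\|A - B\|$ for unimodular $2\times 2$ matrices). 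A case split on whether $|\tr M_k| \ge 1$ (use the second identity) or $< 1$ (use the first) gives
\[
\max\bigl(|w_{q_{n_k}}|,\,|w_{2q_{n_k}}|,\,|w_{-q_{n_k}}|\bigr) \;\ge\; \tfrac12|w_0| - o(1) \;=\; \tfrac12 - o(1),
\]
which contradicts $u \in \ell^2(\Z)$ and thus establishes part (1).

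The main technical obstacle is the central estimate on $\|M_k^\pm - M_k\|$: its right-hand side must reproduce exactly the log-combination $\log q_{n_k+1} + \sum_i \log\|q_{n_k}(\theta - \theta_i)\|$ defining $\delta$. The critical subproblem is bounding
\[
\sum_{j=0}^{q_{n_k}-1}\prod_{i=1}^m \max(\|\theta + j\alpha - \theta_i\|_{\R/\Z},\,|\eta_k|)^{-(\mu_i + 1)}
\]
for multiple poles simultaneously; this requires organizing orbit visits by proximity to each pole via the Three Distance Theorem and a Birkhoff-sum identity of Herman type for $\sum_j \log|f(\theta + j\alpha)|$, whose dominant (Jensen-formula) contribution is $\log\|q_{n_k}(\theta - \theta_i)\|$. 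Combining these contributions coherently across the $m$ poles---absent in the single-pole Maryland case of \cite{maryland}---is the crux of the generalization.
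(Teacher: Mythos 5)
Your overall architecture --- reduce to absence of eigenvalues via Simon--Spencer, then run Gordon's trick at scale $q_{n_k}$ with a block-difference estimate powered by the arithmetic definition of $\delta$ --- matches the paper's (your Cayley--Hamilton endgame is essentially Theorem \ref{absenceofpp}). The gap is in the pivotal estimate. You bound the \emph{matrix norm} $\|M_k^{\pm}-M_k\|$ by telescoping and controlling the sandwiching sub-products of the transfer cocycle $A$ by $e^{(L(E)+\epsilon_0)q_{n_k}}$ ``by upper semicontinuity.'' That is not justified: upper semicontinuity (Lemma \ref{LcontrolA}) applies to \emph{continuous} cocycles, and $A=F/f$ is singular. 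A partial product satisfies $\|A_j(\theta)\|=\|F_j(\theta)\|/\prod_{l<j}|f_l|$, and once the orbit has made its closest approach to the poles the only available upper bound carries the excess factor $\prod_i\bigl(q_{n_k}/\|q_{n_k}(\theta-\theta_i)\|\bigr)^{\mu_i}$ over $e^{j(L+\epsilon_0)}$; this can be exponentially large whenever some $\|q_{n_k}(\theta-\theta_i)\|$ is exponentially small, which the hypotheses permit. Excluding phases that land exactly on a pole does not help --- the issue is proximity, not incidence. Relatedly, your slot-sum $\sum_j\prod_i\max(\|\theta+j\alpha-\theta_i\|,|\eta_k|)^{-(\mu_i+1)}$ does not reproduce the target: the closest-approach slot alone contributes on the order of the product of the inverse closest distances with \emph{no} accompanying factor $1/q_{n_k+1}$, so the claimed right-hand side $\frac{1}{q_{n_k+1}}\prod_i\|q_{n_k}(\theta-\theta_i)\|^{-1}$ is not what this bookkeeping yields.

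The paper avoids both problems by two moves you are missing. First, it never estimates the matrix difference in norm: it applies $A_{q_{n_i}}^{-1}(\theta)-A_{q_{n_i}}^{-1}(\theta-q_{n_i}\alpha)$ to the putative decaying eigenvector, so after telescoping the right tail collapses to the bounded vector $\bigl(\varphi_{-q_{n_i}+j+1},\varphi_{-q_{n_i}+j}\bigr)^{T}$ and only the left partial product must be controlled. Second, it factors $A^{-1}=F/f$ with $F$ analytic, bounds $\|\prod F^l\|$ by upper semicontinuity of the \emph{regular} cocycle (legitimate since $L(\alpha,F)=L(E)$ because $\int\ln|f|=0$), and completes the scalar denominator $\prod_{l\le j}f_l$ to the full period using Remark \ref{partf}. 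The index $\delta$ then enters exactly once, through the single full-period lower bound $\prod_{l=0}^{q_{n_i}-1}|f_l|\ge e^{q_{n_i}(\delta-\varepsilon)}/q_{n_i+1}$ of Lemma \ref{f}, which combines the standard Birkhoff estimate of Lemma \ref{lana} with the closest-approach bound $\|q_{n_i}(\theta-\theta_l)\|/q_{n_i}$ of Lemma \ref{minimal}; no Three Distance Theorem or pole-by-pole summation is needed, and the multi-pole case costs nothing beyond taking the product over $l$. To repair your argument you would need to replace the matrix-norm estimate by the vector estimate against the eigenvector and route all pole contributions through one full-period product of $|f|$; as written, the central inequality is not established.
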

It is sharp since
\begin{thm}\label{jlpoint}\cite{maryland}
 $H_{\alpha,\theta}$  has pure point spectrum on 
$\{E: L(E) > \delta(\alpha,\theta) \}$. 
 \end{thm}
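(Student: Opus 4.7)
The plan is to prove pure point spectrum by producing an exponentially decaying eigenfunction for every generalized eigenvalue $E$ with $L(E) > \delta(\alpha,\theta)$, following a block-resolvent/Poisson-formula scheme adapted to the unbounded tangent potential. By Schnol's theorem and the absence of absolutely continuous spectrum for unbounded potentials (see the remark after Theorem~\ref{sc}), it suffices to show that every polynomially bounded solution $u$ of $(H_{\alpha,\theta} - E)u = 0$ decays exponentially.

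First I would regularize the transfer cocycle: its singularities sit precisely at $\theta + n\alpha \equiv \tfrac12 \pmod 1$, so multiplying each one-step matrix by $\cos\pi(\theta+n\alpha)$ yields an analytic $\SL(2,\R)$-cocycle whose Lyapunov exponent differs from $L(E)$ by the explicit constant $\int_{\T}\ln|\cos\pi\theta|\,d\theta$. Next I would define \emph{good blocks}: intervals $I\subset\Z$ of length $\asymp q_n$ that contain a prescribed target site $y$ but avoid the resonant set $\{k: \|\theta + k\alpha - \tfrac12\|_{\R/\Z} < e^{-(\delta+\varepsilon) q_n}\}$. The definition \eqref{marylanddelta} of $\delta$ guarantees that, at the $q_n$-scale, any near-pole site contributes at most $e^{(\delta+\varepsilon)q_n}$ to the norm of the cocycle, while best-approximation theory for continued fractions shows such near-pole sites are separated by at least $\sim q_{n+1}$; hence good blocks of the required length can be placed around every $y$ along a suitable subsequence of scales $n = n(y)$. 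On such a block the Cramer's-rule representation of the Green's function $G_I(y,\partial I;E)$ as a ratio of cocycle evaluations, combined with a uniform large-deviation estimate for the regularized cocycle, yields the sharp bound $|G_I(y,\partial I; E)| \le e^{-(L(E)-\varepsilon')\,\dist(y,\partial I)}$. Substituting this into the Poisson identity $u_y = -G_I(y,a)u_{a-1} - G_I(y,b)u_{b+1}$ for $I=[a,b]$, together with the polynomial bound on $u$, forces exponential decay of $u_y$ at rate $L(E) - \delta - o(1) > 0$.

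The main obstacle is obtaining the Green's function estimate on good blocks at the sharp rate $L(E)-\varepsilon'$: the Cramer ratio carries the $\cos\pi(\theta+k\alpha)$ factors picked up in the regularization, and these are small precisely near the poles the blocks are designed to avoid. To keep the product of such factors within $e^{\varepsilon q_n}$ one must count, dyadically, how many orbit points $\theta+k\alpha$, $k\in I$, lie in each distance shell from $\tfrac12$---this is exactly the arithmetic problem encoded in the $\ln\|q_n(\theta-\tfrac12)\|_{\R/\Z}$ term of \eqref{marylanddelta}. Marrying this pole-counting with the large-deviation loss, and verifying that good blocks can be selected with enough overlap to pin down $u_y$ for every $y$, is the technical heart of the argument; it is here that the two terms of $\delta(\alpha,\theta)$ balance exactly against $L(E)$ to produce the sharp threshold.
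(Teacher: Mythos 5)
First, a point of order: this paper does not prove Theorem \ref{jlpoint} at all. It is quoted from \cite{maryland} solely to show that Theorem \ref{sc} is sharp for the Maryland model; the localization direction is the main content of \cite{maryland} itself and is established there by a delicate model-specific analysis. So your proposal has to stand on its own, and it does not: the two steps you yourself defer to \lq\lq the technical heart\rq\rq\ are not technicalities but precisely where the scheme breaks.

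The first concrete gap is your separation claim for near-pole sites, which is false in exactly the regime that distinguishes $\delta(\alpha,\theta)$ from $\beta(\alpha)=\limsup \ln q_{n+1}/q_n$. Since $\ln\|q_n(\theta-\tfrac12)\|_{\R/\Z}\le \ln\tfrac12<0$, one always has $\delta\le\beta$, and $\delta<\beta$ happens when $\|q_n(\theta-\tfrac12)\|_{\R/\Z}$ is exponentially small along the $\beta$-realizing scales. But $\|q_n(\theta-\tfrac12)\|_{\R/\Z}=\epsilon$ small means there is a site $k_0\in[0,q_n)$ with $\|\theta+k_0\alpha-\tfrac12\|_{\R/\Z}\lesssim \epsilon/q_n+1/q_{n+1}$, and then every site $k_0+jq_n$ with $|j|$ not too large is almost as close to the pole, because shifting by $q_n$ moves the orbit point by only $\|q_n\alpha\|_{\R/\Z}\approx 1/q_{n+1}$. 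Hence sites within $e^{-(\delta+\varepsilon)q_n}$ of the pole recur with spacing $q_n$, not $q_{n+1}$: best approximation gives separation at least $q_{m+1}$ only for the largest $m$ with $q_{m+1}\lesssim e^{(\delta+\varepsilon)q_n}$, and when $q_{n+1}\gg e^{(\delta+\varepsilon)q_n}$ (the case where the pole term of \eqref{marylanddelta} is active) that separation is $q_n$.

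The second, and fatal, gap is that your scheme never confronts the $\ln q_{n+1}$ term of \eqref{marylanddelta}, i.e.\ the frequency resonances. Pole avoidance can at best account for the $\ln\|q_n(\theta-\tfrac12)\|_{\R/\Z}$ term. The lower bound on the Cramer denominator (non-resonance of the chosen block) is what you attribute to a \lq\lq uniform large-deviation estimate\rq\rq; no such estimate exists: uniform \emph{upper} bounds hold (Lemma \ref{LcontrolA}), but pointwise lower bounds fail exactly at resonant blocks, and eliminating resonances for Liouville $\alpha$ (note $\delta>0$ forces $\beta>0$) is the crux of every arithmetic localization proof. Moreover, resonant blocks cannot be dodged by translation: away from the poles the Maryland potential nearly repeats with period $q_n$ up to error $O(e^{C\varepsilon q_n}/q_{n+1})$, so if a pole-avoiding block $I$ is resonant, so is $I+q_n$ --- resonant blocks recur with the same spacing $q_n$ as the near-pole sites. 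In fact, in the regime $\delta< L(E)<\beta$, which is nonempty precisely when the pole term is active, the paper's own Gordon mechanism (Theorem \ref{absenceofpp}, used to prove Theorem \ref{sc}) shows that a window of length $3q_n$ containing no near-pole site carries an almost-repetition of strength $\beta>L(E)$, which forbids decay of a normalized polynomially bounded solution. Localization there holds \emph{only because} near-pole sites are unavoidable in every such window and act as barriers breaking the repetition. A proof that treats the poles purely as obstacles to be avoided, and relies on rate-$L(E)$ Green's function decay elsewhere, is therefore structurally limited to the threshold $L>\beta$ and cannot reach the sharp threshold $L>\delta$; the active, constructive role of the poles is exactly what the analysis of \cite{maryland} supplies and what your outline precludes.
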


% In particular, for almost every phase $\theta$ (only depends on $\alpha$, not on $\lambda$ or $E$), $H$ has purely singular continuous spectrum 
% on $\{E: L(E)<\beta(\alpha) \}$
 Our result therefore is an extension of Theorem \ref{jlsingular} (to
 which Theorem \ref{sc} specializes for $f=\cos 2\pi\theta$, $g=\lambda \sin
2\pi\theta$)
 to the general family of singular potentials. For $f\equiv 1, g=\lambda \cos 2\pi\theta$ we recover
 the singular continuous part of Theorem \ref{lanaconjecture} (note
 that the
 proof of \cite{ayz1} also extends in this case to  $f\equiv1$ and a
 Lipshitz condition on $g$ without many changes). Theorem \ref{jlpoint} shows that our
result is sharp for the Maryland model. However, Theorems \ref{liu1}, \ref{liu2}
 show that it is not sharp for the almost Mathieu operator other than
 for $\alpha$-Diophantine $\theta.$ Based on this,
 we do not expect sharpness for general Lipshitz or even analytic potentials ($f\equiv
 1$),  and
 conjecture that sharpness (that is point spectrum in the
 complementary regime other than possibly on the transition line) may
 be a corollary of certain monotonicity.

\section{Preliminaries: cocycle, Lyapunov exponent}  
%In our model  (\ref{analyticmodel}), 
Assume without loss of generality,
$ f(\theta)=(e^{2\pi i\theta}-
e^{2\pi i\theta_1}) \cdots (e^{2\pi i\theta}-
e^{2\pi i\theta_m})$, $m=1,\cdots$. % and let $g(\theta)$ be an analytic function on $\T$.

Let $\Theta=\cup_{l=1}^m {\theta_l+\Z\alpha+\Z}$.
From now on we fix $E$ in the spectrum and $\theta\in \Theta^c$ such that $L(E)<\delta(\alpha, \theta)$. 
We will show $H_{\lambda, \alpha, \theta}$ cannot have an eigenvalue
at $E.$

A formal solution of the equation  $ H_{\alpha, \theta}u=Eu $ can be reconstructed via the following equation
 \begin{align*}
\left (\begin{matrix}  u_{n+1} \\ u_{n} \end{matrix} \right )= A(\theta+n\alpha)\left (\begin{matrix}  u_{n} \\  u_{n-1}  \end{matrix} \right )
 \end{align*} 
 where
$
A(\theta)=\left (\begin{matrix}  E-\frac{g(\theta)}{f(\theta)}   &  -1 \\ 1  &   0 \end{matrix} \right )
$ is the so-called transfer matrix.

 The pair $(\alpha,A)$ is the cocycle corresponding to the operator  (\ref{analyticmodel}). 
 It can be viewed as a linear skew-product $(x,\omega)\mapsto(x+\alpha,A(x)\cdot \omega)$.
 Generally, one can define $M_n$ for an invertible cocycle $(\alpha, \M)$ by $(\alpha,M)^n=(n\alpha,M_n)$, $n \in Z$ so that for $n \geq 0$:
 $$M_n(x)=M(x+(n-1)\alpha)M(x+(n-2)\alpha) \cdots M(x),$$ and $M_{-n}(x)=M_n(x-n\alpha)$.
 
The Lyapunov exponent of a cocycle $(\alpha,M)$ is defined by$$L(\alpha,M)=\lim_{n \rightarrow \infty}\frac{1}{n} \int_{\T} \ln \|M_n(x)\|\mathrm{d}x.$$

Let $A(x)=\frac{1}{f(x)}D(x)$ where 
\begin{align*}
D(x)=
\left(
\begin{matrix}
    E f(x)- g(x)        &  -f(x)
 \\       f(x)              &   0
 \end{matrix}
\right)
 \end{align*} 
be the regular part of $A(x)$.
Since $\int_{\T} \ln{|f(x)|} \mathrm{d}x=0$, we have 
\begin{equation}\label{L}
L(E):=L({\alpha, A} )= L(\alpha, D).
\end{equation}

  \begin{lemma} \label{lana}\cite{AJ1}
 Let $\alpha\in \R\backslash\Q $,\ $\theta\in\R$ and $0\leq j_0 \leq q_{n}-1$ be such that 
 $$\mid \sin \pi(\theta+j_{0}\alpha)\mid = \inf_{0\leq j \leq q_{n}-1} \mid \sin \pi(\theta+j\alpha)\mid ,$$
 then for some absolute constant $C>0$,
 $$-C\ln q_{n} \leq \sum_{j=0,j\neq j_0}^{q_{n}-1} \ln \mid \sin \pi (\theta+j\alpha) \mid+(q_{n}-1)\ln2 \leq C\ln q_n$$
 \end{lemma}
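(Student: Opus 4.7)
The plan is to reduce to the rational approximant $\alpha = p_n/q_n$, where an exact product formula is available, and then transfer back to irrational $\alpha$ via a perturbation argument. Since $\gcd(p_n, q_n) = 1$, $\{j p_n \bmod q_n : 0 \le j \le q_n - 1\}$ is a permutation of $\{0, 1, \ldots, q_n - 1\}$, so the cyclotomic identity $\prod_{k=0}^{q-1}(z - e^{2\pi i k/q}) = z^q - 1$ evaluated at $z = e^{2\pi i \theta}$ yields
\[
\prod_{j=0}^{q_n-1} 2\bigl|\sin\pi(\theta + j p_n/q_n)\bigr| = 2|\sin\pi q_n \theta|.
\]
Taking logarithms, the rational analogue of our sum is $\ln|2\sin\pi q_n\theta|\le\ln 2$. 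The rational orbit is equispaced with gap $1/q_n$, so the minimum of $|\sin\pi(\theta + j p_n/q_n)|$ over $j$ is of order $\|q_n\theta\|_{\R/\Z}/q_n$, contributing $\ln\|q_n\theta\|_{\R/\Z} - \ln q_n + O(1)$; subtracting this minimum term leaves a residue of size $O(\ln q_n)$, establishing the lemma when $\alpha = p_n/q_n$.

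Next I would perturb to irrational $\alpha$. Since $|\alpha - p_n/q_n| < 1/(q_n q_{n+1})$, each $\theta + j\alpha$ lies within $1/q_{n+1}$ of the rational point $\theta + j p_n/q_n$. The three-distance theorem guarantees that both orbits have consecutive gaps of order $1/q_n$ on the circle; after removing the single index of minimum distance to $\Z$ on each side, the $k$-th closest remaining point lies at distance $\ge c k/q_n$ from $\Z$. Pairing the two configurations in the natural way and applying the mean value theorem to $\ln|\sin\pi x|$, whose derivative $\pi\cot\pi x$ satisfies $|\pi\cot\pi x| \le C q_n/k$ at distance $\ge c k/q_n$ from $\Z$, each pointwise discrepancy is bounded by $(q_n/k)\cdot(1/q_{n+1})$. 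Summing over $k = 1, \ldots, q_n - 1$ gives $(q_n/q_{n+1})\cdot O(\ln q_n) = O(\ln q_n)$, which is the required perturbation bound.

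The main obstacle I anticipate is the handling of the minimizing indices $j_0$ and $j_0'$ on the irrational and rational sides respectively, since both configurations may have a point arbitrarily close to $\Z$ and their logarithmic contributions are unbounded and not individually comparable. The resolution is that the lemma drops the minimizer outright, and by the three-distance theorem only one index on each side lies within $c/q_n$ of $\Z$; the two minimizer indices either coincide or differ by a swap whose net effect on the retained sum is at most $\ln q_n + O(1)$, because the alternative candidate term, being at distance $\ge c/q_n$ from $\Z$, contributes at most $\ln q_n + O(1)$ in magnitude. Combining the rational identity, the bulk perturbation estimate, and this exchange bound yields $\bigl|\sum_{j\ne j_0}\ln|2\sin\pi(\theta + j\alpha)|\bigr| \le C\ln q_n$, which is exactly the lemma's conclusion after expanding $\ln|2\sin\pi x| = \ln|\sin\pi x| + \ln 2$.
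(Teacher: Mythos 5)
The paper does not prove this lemma at all---it is quoted verbatim from \cite{AJ1} (and goes back to \cite{j}), so there is no in-paper proof to compare against. Your argument is essentially the standard one from those references: the exact cyclotomic identity $\prod_{j=0}^{q_n-1}|1-e^{2\pi i(\theta+jp_n/q_n)}|=2|\sin\pi q_n\theta|$ for the rational approximant, followed by a perturbation of size $|j(\alpha-p_n/q_n)|<1/q_{n+1}$ controlled by the $\asymp k/q_n$ lower bound on the $k$-th closest point to $\Z$, so that the total discrepancy is $\frac{q_n}{q_{n+1}}\sum_k \frac{C}{k}=O(\ln q_n)$. This is correct, and you have correctly identified the only genuinely delicate point, namely the possible mismatch of the two minimizing indices, which costs a single term of size $O(\ln q_n)$ since the second-closest point in either configuration is at distance $\ge c/q_n$ from $\Z$. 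The one spot where your write-up is slightly glib is the mean value theorem step: for the $O(1)$ smallest ranks $k\lesssim q_n/q_{n+1}$ the segment joining $\theta+j\alpha$ to $\theta+jp_n/q_n$ could straddle an integer, so $\cot$ is not uniformly bounded there; but those finitely many terms are each individually $O(\ln q_n)$ by the same $\ge c/q_n$ separation bound, so the conclusion stands.
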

 
 We will also use that the denominators of continued fraction approximants of $\alpha$ satisfy
 
 $$ \| k\alpha \|_{\R \backslash \Z} \geq \| q_n \alpha \|_{\R \backslash \Z}, 1 \leq k < q_{n+1}, $$ 
 and
\begin{equation}\label{contfraction}
 \dfrac{1}{2q_{n+1}}  \leq  \|q_n \alpha\|_{\R \backslash \Z}  \leq  \dfrac{1}{q_{n+1}}.
\end{equation} 

A quick corollary of subadditivity and unique ergodicity is the
following upper semicontinuity statement:
 \begin{lemma}\label{LcontrolA} (e.g. \cite{AJ2})
Suppose $(\alpha,A)$ is a continuous cocycle. Then for any $ \varepsilon>0$, there exists $C(\varepsilon)>0$, such that for any $x\in \T$ we have
$$\|A_n(x) \| \leq C e ^{n(L(A)+\varepsilon)}.$$
\end{lemma}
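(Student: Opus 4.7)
\begin{pf}[Proof proposal for Lemma \ref{LcontrolA}]
The plan is to combine the defining limit of the Lyapunov exponent with unique ergodicity of the irrational rotation, using subadditivity to pass from an average to a pointwise bound. Write $\phi_n(x):=\ln\|A_n(x)\|$. By the cocycle relation $A_{n+m}(x)=A_m(x+n\alpha)A_n(x)$ and submultiplicativity of the operator norm, $\phi_{n+m}(x)\le \phi_n(x)+\phi_m(x+n\alpha)$. Since $A$ is continuous on the compact space $\T$, each $\phi_n$ is continuous and bounded, and $\frac{1}{n}\int_\T\phi_n\,dx\to L(A)$ by definition.

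Fix $\varepsilon>0$. First, choose $N$ large enough that
\begin{equation*}
\frac{1}{N}\int_\T\phi_N(x)\,dx\le L(A)+\frac{\varepsilon}{2}.
\end{equation*}
Since $\alpha$ is irrational, the rotation $x\mapsto x+N\alpha$ is uniquely ergodic on $\T$, and $\phi_N$ is continuous, so the Birkhoff averages converge uniformly:
\begin{equation*}
\frac{1}{k}\sum_{j=0}^{k-1}\phi_N(x+jN\alpha)\longrightarrow \int_\T\phi_N\,dx\quad\text{uniformly in }x\in\T.
\end{equation*}
Hence there exists $k_0=k_0(\varepsilon)$ such that for all $k\ge k_0$ and all $x\in\T$,
\begin{equation*}
\sum_{j=0}^{k-1}\phi_N(x+jN\alpha)\le kN\bigl(L(A)+\varepsilon\bigr).
\end{equation*}

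Iterating the subadditive inequality gives $\phi_{kN}(x)\le \sum_{j=0}^{k-1}\phi_N(x+jN\alpha)$, so $\phi_{kN}(x)\le kN(L(A)+\varepsilon)$ for all $k\ge k_0$ and all $x$. For a general $n$, write $n=kN+r$ with $0\le r<N$, and apply subadditivity once more:
\begin{equation*}
\phi_n(x)\le \phi_{kN}(x)+\phi_r(x+kN\alpha)\le n\bigl(L(A)+\varepsilon\bigr)+M,
\end{equation*}
where $M:=\max_{0\le r<N}\sup_{x\in\T}|\phi_r(x)|<\infty$ by continuity and compactness. The finitely many $k<k_0$ are absorbed into the constant. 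Setting $C(\varepsilon):=e^{M}$ times an adjustment for the small-$k$ regime yields the claimed bound $\|A_n(x)\|\le C(\varepsilon)e^{n(L(A)+\varepsilon)}$ uniformly in $x\in\T$ and $n\ge 0$.

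The main (essentially the only) substantive ingredient is the uniform convergence of Birkhoff sums granted by unique ergodicity of irrational rotation together with continuity of $\phi_N$; the rest is a routine telescoping via subadditivity, where one must be careful only to control the remainder term of length $<N$ independently of $n$. I do not anticipate any real obstacle beyond correctly choosing the order of quantifiers: first pick $N$ to make the integral average close to $L(A)$, and only then use unique ergodicity with this fixed $N$ to get uniformity in $x$.
\end{pf}
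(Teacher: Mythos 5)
Your proposal follows exactly the route the paper intends: the paper gives no argument of its own for this lemma, describing it as ``a quick corollary of subadditivity and unique ergodicity'' and citing \cite{AJ2}, and your argument --- fix $N$ from the subadditive limit, then apply uniform convergence of Birkhoff averages for the uniquely ergodic rotation by $N\alpha$, then telescope and absorb the remainder of length $<N$ --- is the standard realization of that one-line description, with the quantifier order handled correctly.

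One caveat is worth fixing. Your claim that each $\phi_n=\ln\|A_n\|$ is ``continuous and bounded'' requires $\|A_n(x)\|$ to stay away from zero; this holds for invertible (e.g.\ $\SL(2,\R)$-valued) cocycles but not for merely continuous ones, and the paper genuinely needs the latter generality: Remark \ref{partf} applies the lemma to scalar cocycles $g$ having zeros, and the proof of Lemma \ref{A} applies it to the matrix $F$ whose determinant $f^2$ vanishes at the poles. In that setting $\phi_N$ is only upper semicontinuous and may take the value $-\infty$, so unique ergodicity cannot be invoked for $\phi_N$ directly. The repair is routine and preserves your structure: replace $\phi_N$ by the continuous truncation $\max(\phi_N,-M)$, which dominates $\phi_N$, whose integral is still below $N(L(A)+\varepsilon/2)$ for $M$ large (monotone convergence from above, with the convention that the integral may be $-\infty$), and to which uniform convergence of Birkhoff averages does apply; similarly, in the remainder estimate you only need $\max_{0\le r<N}\sup_x \phi_r(x)$, which is finite by submultiplicativity and compactness, rather than $\sup_x|\phi_r(x)|$, which need not be.
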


\begin{rem}\label{partf}
Applying this to 1-dimensional continuous cocycles, we get that if $g$
is a continuous function such that $\ln |g| \in L^1(\T)$,
then $$|\prod_{l=a}^b g (x+l\alpha) | \leq e^{(b-a+1)(\int \ln|g| \mathrm{d}\theta+\varepsilon)}.$$
\end{rem}

\section{Absence of point spectrum}
Let $\varphi$ be a solution to $H_{\alpha, \theta}\varphi =E\varphi$ satisfying 
$\| \left(
\begin{matrix}
\varphi_0\\
\varphi_{-1}
\end{matrix}
\right)
\|=1$.
We have the following restatement of Gordon's lemma. We state a precise form that will be convenient for us.
\begin{thm}\label{absenceofpp}
If there exists a constant $c>0$ and a subsequence $q_{n_i}$ of $q_n$ such that the following estimates holds:
\begin{equation}\label{absenceofpp_1}
\|(A_{q_{n_i}}^2(\theta)-A_{2q_{n_i}}(\theta))
\left(
\begin{matrix}
\varphi_0\\
\varphi_{-1}
\end{matrix}
\right)
\|\leq e^{-cq_{n_i}}
\end{equation}
and
\begin{equation}\label{absenceofpp_2}
\|(A^{-1}_{q_{n_i}}(\theta)
-A^{-1}_{q_{n_i}}(\theta-q_{n_i}\alpha))
\left(
\begin{matrix}
\varphi_0\\
\varphi_{-1}
\end{matrix}
\right)
\|\leq e^{-cq_{n_i}},
\end{equation}
then we have 
\begin{equation}\label{maxineq}
\max\{ \|(\begin{array}{cc}\varphi_{q_{n_i}} \\ \varphi_{q_{n_i}-1}\end{array})\|, 
           \|(\begin{array}{cc}\varphi_{-q_{n_i}} \\ \varphi_{-q_{n_i}-1}\end{array})\|,
           \|(\begin{array}{cc}\varphi_{2q_{n_i}} \\ \varphi_{2q_{n_i}-1}\end{array})\| \} \geq \frac{1}{4}.
\end{equation}
\end{thm}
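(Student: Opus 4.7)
The plan is to run a Cayley--Hamilton (Gordon-type) argument on the matrix $B:=A_{q_{n_i}}(\theta)$. Set $v:=(\varphi_0,\varphi_{-1})^T$, so that $\|v\|=1$ and $\det B=1$. Writing $t:=\tr B$, the Cayley--Hamilton relation $B^2-tB+I=0$ rearranges to the two identities
\begin{equation*}
v \;=\; t\,Bv - B^2 v, \qquad B^{-1}v+Bv\;=\;t\,v,
\end{equation*}
which together express the data one period forward, two periods forward, and one period backward in terms of the single parameter $t$.

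I would then split into cases according to the size of $|t|$. If $|t|\le 1$, the first identity yields $1=\|v\|\le |t|\|Bv\|+\|B^2v\|\le \|Bv\|+\|B^2v\|$, so $\max(\|Bv\|,\|B^2v\|)\ge 1/2$. If instead $|t|>1$, the second identity yields $1<|t|=\|B^{-1}v+Bv\|\le \|B^{-1}v\|+\|Bv\|$, so $\max(\|Bv\|,\|B^{-1}v\|)\ge 1/2$. Thus, unconditionally, at least one of the three quantities $\|Bv\|$, $\|B^2v\|$, $\|B^{-1}v\|$ is at least $1/2$.

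The final step is to translate these three cocycle norms into the three physical norms appearing in (\ref{maxineq}). By construction $Bv=(\varphi_{q_{n_i}},\varphi_{q_{n_i}-1})^T$, while the usual convention for the backward transfer matrix gives $(\varphi_{-q_{n_i}},\varphi_{-q_{n_i}-1})^T = A^{-1}_{q_{n_i}}(\theta-q_{n_i}\alpha)\,v$ and $(\varphi_{2q_{n_i}},\varphi_{2q_{n_i}-1})^T = A_{2q_{n_i}}(\theta)\,v$. Hypotheses (\ref{absenceofpp_1}) and (\ref{absenceofpp_2}) then say precisely that these two vectors differ from $B^2v$ and from $B^{-1}v$ respectively by at most $e^{-cq_{n_i}}$. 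Consequently the maximum of the three physical norms in (\ref{maxineq}) is at least $1/2-e^{-cq_{n_i}}$, which exceeds $1/4$ for all sufficiently large $n_i$. There is essentially no technical obstacle here: the content is entirely the Cayley--Hamilton trichotomy on $t$, and the pair of hypotheses (\ref{absenceofpp_1})--(\ref{absenceofpp_2}) has been calibrated so that both the $B^2v$ approximation and the $B^{-1}v$ approximation are simultaneously available, which is exactly what the Gordon dichotomy requires in order to operate in either case.
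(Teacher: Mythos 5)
Your proposal is correct and follows exactly the route the paper indicates: the two Cayley--Hamilton identities for the unimodular matrix $B=A_{q_{n_i}}(\theta)$ together with a trichotomy on $|\tr B|$ (you split at $1$ rather than the paper's $\tfrac12$, which only changes the constants), followed by using (\ref{absenceofpp_1})--(\ref{absenceofpp_2}) to replace $B^2v$ and $B^{-1}v$ by the true solution vectors. The only (harmless) caveat is that the final bound $\tfrac12-e^{-cq_{n_i}}\ge\tfrac14$ requires $q_{n_i}$ large, which is all that is ever used.
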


\begin{proof}
This is a standard argument, going back to \cite{Gordon_1976}. The key idea is to use the following two equalities:
\begin{align*}\label{gordon}
\left\lbrace
\begin{matrix}
A_{q_{n_i}}(\theta)-\mathrm{Tr} A_{q_{n_i}}(\theta) \cdot Id + A_{q_{n_i}}^{-1}(\theta)=0\\
A_{q_{n_i}}^2(\theta)-\mathrm{Tr} A_{q_{n_i}}(\theta) \cdot A_{q_{n_i}}(\theta) +Id=0
\end{matrix}
\right.
\end{align*}
and separate the cases $|\mathrm{Tr}A_{q_{n_i}} (\theta)| > \frac{1}{2}$,$|\mathrm{Tr}A_{q_{n_i}} (\theta)| < \frac{1}{2}.$
 
\end{proof} $\hfill{} \Box $

\subsection{Proof of Theorem \ref{sc}}
% If $f(x_0)=0$, for some $x_0\in\T$, the potential is unbounded, so directly from \cite{simonspencer}, there is no absolutely continuous spectrum for $H_{\alpha, \theta}$. Thus it suffices to prove (1) of Theorem\ref{sc}, i.e. absence of point spectrum for our model. We do it by contradiction. 
Assume $\varphi$ is a decaying solution of $H_{\alpha, \theta}\varphi = E\varphi$, satisfying $\|(\begin{array}{cc} \varphi_0 \\ \varphi_{-1} \end{array})\|=1$. 
On one hand, it must be true that for any $\eta>0$, there exists $N$ such that 
$\|\left(
\begin{matrix}
\varphi_k\\
\varphi_{k-1}
\end{matrix}
\right)
\|\leq \eta$ for $|k|>N$.
On the other hand, we will prove the following lemma in section $5$:
\begin{lemma}\label{A}
For any $ \varepsilon >0$ there exists a subsequence 
$\{q_{n_i}\}$ of $\{q_n\}$ so that we have the following estimates:

\begin{equation}\label{Ainverse}
\|(A^{-1}_{q_{n_i}}(\theta)
-A^{-1}_{q_{n_i}}(\theta-q_{n_i}\alpha))
\left(
\begin{matrix}
\varphi_0\\
\varphi_{-1}
\end{matrix}
\right)
\| \leq e^{q_{n_i}(L(E)-\delta(\alpha, \theta)+4\varepsilon)},
\end{equation}
and
\begin{equation}\label{Asquare}
\|(A_{q_{n_i}}^2(\theta)-A_{2q_{n_i}}(\theta))
\left(
\begin{matrix}
\varphi_0\\
\varphi_{-1}
\end{matrix}
\right)
\| \leq e^{q_{n_i}(L(E)-\delta(\alpha, \theta)+4\varepsilon)}.
\end{equation}

\end{lemma}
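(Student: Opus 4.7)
The plan is to extract a subsequence $\{q_{n_i}\}$ realizing the $\limsup$ in (\ref{delta}), reduce both differences via cocycle and resolvent identities to bounding $A_{q_n}(\theta) - A_{q_n}(\theta \pm q_n\alpha)$, and then control this using the decomposition $A = D/f$ together with Lemmas~\ref{LcontrolA} and~\ref{lana}. To start, I extract a subsequence along which $\prod_{l=1}^m \|q_{n_i}(\theta - \theta_l)\|_{\R/\Z} \cdot q_{n_i+1} > e^{q_{n_i}(\delta(\alpha,\theta) - \varepsilon)}$. The cocycle identity $A_{2q_n}(\theta) = A_{q_n}(\theta + q_n\alpha) A_{q_n}(\theta)$ yields $A_{q_n}^2(\theta) - A_{2q_n}(\theta) = [A_{q_n}(\theta) - A_{q_n}(\theta + q_n\alpha)] A_{q_n}(\theta)$, so (\ref{Asquare}) reduces to estimating $\|[A_{q_n}(\theta) - A_{q_n}(\theta + q_n\alpha)](\varphi_{q_n}, \varphi_{q_n-1})^T\|$ with $(\varphi_{q_n}, \varphi_{q_n-1})^T = A_{q_n}(\theta)(\varphi_0, \varphi_{-1})^T$. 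The resolvent identity combined with $\|A^{-1}\| = \|A\|$ for $\SL(2,\C)$ matrices reduces (\ref{Ainverse}) to the parallel quantity with $-q_n\alpha$ in place of $+q_n\alpha$.

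Next, I write $A_n(\theta) = D_n(\theta)/F_n(\theta)$ with $F_n(\theta) = \prod_{j=0}^{n-1} f(\theta + j\alpha)$; by (\ref{L}) the continuous cocycle $D$ has Lyapunov exponent $L(E)$, so Lemma~\ref{LcontrolA} gives $\|D_{q_n}(\theta)\| \leq Ce^{q_n(L(E) + \varepsilon)}$ together with the telescoped Lipschitz estimate $\|D_{q_n}(\theta \pm q_n\alpha) - D_{q_n}(\theta)\| \leq C(q_n/q_{n+1})e^{q_n(L(E)+\varepsilon)}$. For $|F_{q_n}|$, Lemma~\ref{lana} applied to each pole $\theta_l$ shows that $\prod_{j=0}^{q_n-1} 2|\sin\pi(\theta - \theta_l + j\alpha)|$ is, up to factors $e^{O(\ln q_n)}$, equal to its smallest term, which by rational approximation $\alpha = p_n/q_n + O(1/(q_n q_{n+1}))$ is $\asymp \|q_n(\theta - \theta_l)\|_{\R/\Z}/q_n$. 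Hence
$$|F_{q_n}(\theta)| = q_n^{-O(1)}\prod_{l=1}^m \|q_n(\theta - \theta_l)\|_{\R/\Z},$$
and similarly at $\theta \pm q_n\alpha$.

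The difference splits as
$$A_{q_n}(\theta + q_n\alpha) - A_{q_n}(\theta) = \frac{D_{q_n}(\theta+q_n\alpha) - D_{q_n}(\theta)}{F_{q_n}(\theta+q_n\alpha)} + D_{q_n}(\theta)\left[\frac{1}{F_{q_n}(\theta+q_n\alpha)} - \frac{1}{F_{q_n}(\theta)}\right],$$
with the first piece controlled by the $D$-Lipschitz bound above. For the second, the rational-approximation identity $\prod_{j=0}^{q_n-1}(e^{2\pi i\theta}e^{2\pi ijp_n/q_n} - e^{2\pi i\theta_l}) = \pm(e^{2\pi iq_n\theta} - e^{2\pi iq_n\theta_l})$ shows that both $F_{q_n}(\theta)$ and $F_{q_n}(\theta+q_n\alpha)$ are, up to $e^{O(\ln q_n)}$, equal to $\prod_l 2|\sin\pi q_n(\cdot - \theta_l)|$ evaluated at $\theta$ and $\theta + q_n\alpha$ respectively, the second argument differing from the first by $q_n^2\alpha \equiv O(q_n/q_{n+1}) \pmod{\Z}$. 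This yields $|F_{q_n}(\theta+q_n\alpha)/F_{q_n}(\theta) - 1| \leq q_n^{O(1)}/(q_{n+1}\min_l \|q_n(\theta-\theta_l)\|_{\R/\Z})$, which, using $\min_l \|q_n(\theta-\theta_l)\| \geq \prod_l \|q_n(\theta-\theta_l)\| \geq e^{q_n(\delta-\varepsilon)}/q_{n+1}$ on the subsequence, is $\leq q_n^{O(1)} e^{-q_n(\delta-\varepsilon)}$. Combining with $\|(\varphi_{q_n}, \varphi_{q_n-1})^T\| \leq e^{q_n(L(E)+\varepsilon)}/|F_{q_n}(\theta)|$ and absorbing the remaining $1/|F_{q_n}|$ via the subsequence identity $|F_{q_n}(\theta)| \cdot q_{n+1} \geq q_n^{-O(1)} e^{q_n(\delta-\varepsilon)}$ then delivers the claimed bound $e^{q_{n_i}(L(E) - \delta(\alpha,\theta) + 4\varepsilon)}$; (\ref{Ainverse}) follows by the identical argument with $-q_n\alpha$.

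The main technical obstacle will be the sharp $F$-difference estimate: a naive Lipschitz bound for $|F_{q_n}(\theta+q_n\alpha) - F_{q_n}(\theta)|$ fails because $f'/f$ has simple poles at the $\theta_l$, and a direct telescoping of the product blows up whenever some $\theta + j\alpha$ is close to a pole. The remedy, sketched above, is to use the exact rational-approximation product formula to collapse the $q_n$-fold product of sines to a single sine of $q_n(\theta-\theta_l)$, whose shift under $\theta \mapsto \theta + q_n\alpha$ is $q_n\cdot q_n\alpha \equiv O(q_n/q_{n+1}) \pmod{\Z}$; the subsequence condition then ensures this shift is exponentially smaller than $\min_l \|q_n(\theta-\theta_l)\|$, producing the critical relative gain $e^{-q_n(\delta-\varepsilon)}$ that drives the whole estimate.
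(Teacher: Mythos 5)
Your skeleton (extract the subsequence realizing the $\limsup$ in (\ref{delta}), reduce via the cocycle identity to a single difference $A_{q_n}(\theta)-A_{q_n}(\theta\pm q_n\alpha)$ acting on a bounded solution vector, separate the regular part $D$ from the singular denominator $F_{q_n}=\prod_{j=0}^{q_n-1}f(\cdot+j\alpha)$, and control the poles via Lemma \ref{lana}) is the same as the paper's, and your first piece $\bigl(D_{q_n}(\theta+q_n\alpha)-D_{q_n}(\theta)\bigr)/F_{q_n}(\theta+q_n\alpha)$ is handled correctly. The gap is in the second piece. You bound $D_{q_n}(\theta)\bigl[F_{q_n}(\theta+q_n\alpha)^{-1}-F_{q_n}(\theta)^{-1}\bigr]$ by a product of absolute values: $\|D_{q_n}(\theta)\|\cdot |F_{q_n}(\theta)|^{-1}\cdot |F_{q_n}(\theta+q_n\alpha)/F_{q_n}(\theta)-1|$. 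Count powers of $q_{n+1}$ against powers of $e^{q_n\delta}$: the shift supplies exactly one factor $\|q_n\cdot q_n\alpha\|\le q_n/q_{n+1}$, but your expression contains \emph{two} reciprocals of pole quantities --- $|F_{q_n}(\theta)|^{-1}\le q_{n+1}e^{-q_n(\delta-\varepsilon)}$ (Lemma \ref{f}) and, inside your ratio bound, $(\min_l\|q_n(\theta-\theta_l)\|)^{-1}\le q_{n+1}e^{-q_n(\delta-\varepsilon)}$. The net is $q_n^{O(1)}\,q_{n+1}\,e^{q_n(L-2\delta+3\varepsilon)}$, and along your subsequence $\ln q_{n+1}\approx q_n\delta-\sum_l\ln\|q_n(\theta-\theta_l)\|$ with no upper bound on $-\sum_l\ln\|q_n(\theta-\theta_l)\|$; the leftover factor $q_{n+1}e^{-q_n\delta}$ can therefore be $e^{cq_n}$ for arbitrarily large $c$. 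So your estimate fails precisely in the regime the lemma exists for, namely when the poles contribute to $\delta$ (for $f\equiv 1$ the accounting closes, which is why the loss is easy to miss). The second piece is in fact small, but only through a cancellation between $D_{q_n}(\theta)$ applied to the solution and the variation of $F_{q_n}$, which a product-of-norms bound discards.

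The paper's remedy is structural and you would need to adopt it (or find another way to capture the cancellation): telescope the product $\prod_j F^j/f_j$ term by term instead of separating numerator and denominator globally. In each telescoped term the trailing partial product of $A^{\pm 1}$ acts first on $(\varphi_0,\varphi_{-1})^T$ and is replaced by a uniformly bounded solution vector --- this also absorbs one singular factor $1/f$ into the solution --- so each term carries exactly one Lipschitz difference $O(1/q_{n+1})$ and, after completing the denominator to $\prod_{l=0}^{q_n-1}f_l$ and bounding the complementary numerator product by $e^{q_n\varepsilon}$ (Remark \ref{partf}), exactly one factor $q_{n+1}e^{-q_n(\delta-\varepsilon)}$. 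The two occurrences of $q_{n+1}$ then cancel and the sum of $q_n$ terms gives $e^{q_n(L-\delta+4\varepsilon)}$ as claimed. A secondary point: you should use $\|(\varphi_{q_n},\varphi_{q_n-1})^T\|\le C$ from the decay hypothesis rather than $e^{q_n(L+\varepsilon)}/|F_{q_n}(\theta)|$, which only worsens your bookkeeping.
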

Then combining Lemma \ref{A} and Theorem \ref{absenceofpp} we get a contradiction, which shows the absence of point spectrum.\qed

\section{key lemmas}
Let $| \sin \pi (\theta-\theta_l+j_l \alpha) | =\inf_{0\leq j \leq q_n-1} | \sin \pi (\theta-\theta_l+j\alpha)|$.
 
 \begin{lemma} \label{minimal}
 
If $\delta(\alpha,\theta) > 0$, then for any $\varepsilon>0$, there exists a subsequence $q_{n_i}$ of $q_n$ such that the following estimate holds
 $$\prod_{l=1}^m | \sin \pi (\theta-\theta_l+j_l \alpha) | \geq \frac{e^{q_{n_i}(\delta-\frac{\varepsilon}{2})}}{q_{n_i+1}}.$$
\end{lemma}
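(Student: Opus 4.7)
The plan is to combine the $\limsup$ definition of $\delta$ with a triangle-inequality relation between $\eta_l := \|(\theta-\theta_l) + j_l\alpha\|_{\R/\Z}$ and the quantity $\|q_n(\theta-\theta_l)\|_{\R/\Z}$ appearing in (\ref{delta}). First, from the $\limsup$ I extract a subsequence $q_{n_i}$ along which
$$\sum_{l=1}^m \ln\|q_{n_i}(\theta-\theta_l)\|_{\R/\Z} + \ln q_{n_i+1} \;\geq\; q_{n_i}\left(\delta - \tfrac{\varepsilon}{3}\right).$$
Once I establish the linking estimate $\eta_l \geq \|q_{n_i}(\theta-\theta_l)\|_{\R/\Z}/(2q_{n_i})$, the lemma will follow using $|\sin \pi x| \geq 2\|x\|_{\R/\Z}$ and absorption of the residual factor $q_{n_i}^m$ into the exponential via $\ln q_n/q_n \to 0$.

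For the linking estimate, write $y_l = \theta - \theta_l$. If $q_n \eta_l \leq 1/2$ then $\|q_n(y_l + j_l\alpha)\|_{\R/\Z} = q_n\eta_l$, so the triangle inequality in $\R/\Z$ combined with $j_l < q_n$ and (\ref{contfraction}) yields
$$q_n \eta_l \;=\; \|q_n y_l + j_l q_n \alpha\|_{\R/\Z} \;\geq\; \|q_n y_l\|_{\R/\Z} - j_l\|q_n\alpha\|_{\R/\Z} \;\geq\; \|q_n y_l\|_{\R/\Z} - q_n/q_{n+1}.$$
If moreover $\|q_n y_l\|_{\R/\Z} \geq 2q_n/q_{n+1}$, this simplifies to $\eta_l \geq \|q_n y_l\|_{\R/\Z}/(2q_n)$. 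The remaining case $q_n\eta_l > 1/2$ is better still, since then $\eta_l > 1/(2q_n) \geq \|q_n y_l\|_{\R/\Z}/(2q_n)$ holds trivially.

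The hard part will be to verify the hypothesis $\|q_{n_i} y_l\|_{\R/\Z} \geq 2q_{n_i}/q_{n_i+1}$ for every $l$ along the subsequence. I would do this by contradiction: if some $\|q_{n_i} y_{l_0}\|_{\R/\Z} < 2 q_{n_i}/q_{n_i+1}$, then combined with $\|q_{n_i} y_l\|_{\R/\Z} \leq 1/2$ for the remaining $l$,
$$\sum_{l=1}^m \ln\|q_{n_i} y_l\|_{\R/\Z} + \ln q_{n_i+1} \;\leq\; \ln q_{n_i} + O(1),$$
which contradicts the lower bound $q_{n_i}(\delta - \varepsilon/3)$ for large $i$, since $\delta > 0$. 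With the linking estimate in hand, the final assembly reads
$$\prod_{l=1}^m |\sin \pi(\theta - \theta_l + j_l\alpha)| \;\geq\; 2^m\prod_{l=1}^m \eta_l \;\geq\; \frac{\prod_l \|q_{n_i} y_l\|_{\R/\Z}}{q_{n_i}^m} \;\geq\; \frac{e^{q_{n_i}(\delta - \varepsilon/3)}}{q_{n_i+1}\, q_{n_i}^m} \;\geq\; \frac{e^{q_{n_i}(\delta - \varepsilon/2)}}{q_{n_i+1}},$$
valid once $i$ is so large that $m\ln q_{n_i} \leq q_{n_i}\varepsilon/6$.
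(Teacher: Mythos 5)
Your proof is correct and follows essentially the same route as the paper's: extract the subsequence from the $\limsup$, bound $|\sin\pi(\theta-\theta_l+j_l\alpha)|$ below by a multiple of $\|q_{n_i}(\theta-\theta_l)\|_{\R/\Z}/q_{n_i}$ via the triangle inequality together with $\|q_{n_i}\alpha\|_{\R/\Z}\le 1/q_{n_i+1}$, and absorb the residual $q_{n_i}^m$ into the exponential. The only cosmetic difference is that you verify the non-degeneracy condition $\|q_{n_i}(\theta-\theta_l)\|_{\R/\Z}\ge 2q_{n_i}/q_{n_i+1}$ by contradiction, whereas the paper reads it off directly from the observation that each factor (being at most $1/2$) is bounded below by the whole product $\prod_l\|q_{n_i}(\theta-\theta_l)\|_{\R/\Z}> e^{q_{n_i}(\delta-\varepsilon/4)}/q_{n_i+1}$.
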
 

 \begin{proof}
  By the definition of $\delta(\alpha, \theta)$, there exists a subsequence $q_{n_i}$ of $q_n $ such that
  $$\dfrac{\sum_{l=1}^m \ln\|q_{n_i}(\theta-\theta_l) \|+ \ln q_{n_i+1}} {q_{n_i}} >\delta(\alpha,\theta)-\frac{\varepsilon}{4},$$
  thus
  $$\|q_{n_i}(\theta-\theta_1)\| \cdots \|q_{n_i}(\theta-\theta_m)\| > \frac{e^{q_{n_i} (\delta-\frac{\varepsilon}{4})}}{q_{n_i+1}}.$$
  In particular,
  $\|q_{n_i} (\theta -\theta_l)\| >  \frac{e^{q_{n_i} (\delta-\frac{\varepsilon}{4})}}{q_{n_i+1}} $ for any $1\leq l \leq m$.
  Since
  \begin{align*}
           &| \sin \pi (\theta-\theta_l+j_l \alpha)|\\
   \geq &2\|(\theta-\theta_l+j_l \alpha)\|\\
   \geq &\frac{2\|q_{n_i}(\theta-\theta_l+j_l \alpha)\|}{q_{n_i}} \\
   \geq &\frac{2\|q_{n_i}(\theta-\theta_l)\|-\frac{2q_{n_i}}{q_{n_i+1}}}{q_{n_i}} \\
   \geq &\frac{\|q_{n_i}(\theta-\theta_l)\|}{q_{n_i}} .
 \end{align*}
  We have
  $$\prod_{l=1}^m | \sin \pi (\theta-\theta_l+j_l \alpha) | 
  \geq \prod_{l=1}^m  \frac{\|q_{n_i}(\theta-\theta_l)\|}{q_{n_i}} 
  > \frac{e^{q_{n_i} (\delta-\frac{\varepsilon}{4})}}{q_{n_i+1}} \cdot \frac{1}{(q_{n_i})^m}
  > \frac{e^{q_{n_i}(\delta-\frac{\varepsilon}{2})}}{q_{n_i+1}}$$
 \end{proof} $\hfill{} \Box $
 
\begin{lemma}\label{f}
 The following estimate holds
 
$$\prod_{j=0}^{q_{n_i}-1}|f(\theta+j\alpha )| \geq \dfrac{e^{q_{n_i}(\delta - \varepsilon)}}{q_{n_i+1}}.$$

\end{lemma}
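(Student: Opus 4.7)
The plan is to factor the product over $j$ into $m$ separate products, one for each pole $\theta_l$, apply Herman/AJ-type bound (Lemma \ref{lana}) to each such product to control the ``off-minimum'' factors, and then invoke Lemma \ref{minimal} to control the remaining minimum factors.

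First I would use the factorization $f(\theta)=\prod_{l=1}^m (e^{2\pi i\theta}-e^{2\pi i\theta_l})$ to write
\[
|f(\theta)|=\prod_{l=1}^m 2\bigl|\sin\pi(\theta-\theta_l)\bigr|,
\]
so that
\[
\prod_{j=0}^{q_{n_i}-1}|f(\theta+j\alpha)|
=\prod_{l=1}^m\prod_{j=0}^{q_{n_i}-1} 2\bigl|\sin\pi(\theta-\theta_l+j\alpha)\bigr|.
\]
For each fixed $l$, apply Lemma \ref{lana} with $\theta$ replaced by $\theta-\theta_l$: the lemma gives
\[
\prod_{\substack{0\le j\le q_{n_i}-1 \\ j\ne j_l}} 2\bigl|\sin\pi(\theta-\theta_l+j\alpha)\bigr|\ge e^{-C\ln q_{n_i}}=q_{n_i}^{-C},
\]
where $j_l$ is the index achieving the minimum. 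Multiplying the ``minimum'' factor back in and taking the product over $l=1,\dots,m$ yields
\[
\prod_{j=0}^{q_{n_i}-1}|f(\theta+j\alpha)|\ge 2^m\,q_{n_i}^{-mC}\prod_{l=1}^m\bigl|\sin\pi(\theta-\theta_l+j_l\alpha)\bigr|.
\]

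Next, I would choose the subsequence $\{q_{n_i}\}$ provided by Lemma \ref{minimal}, so that the remaining product of minimum values is bounded below by $e^{q_{n_i}(\delta-\varepsilon/2)}/q_{n_i+1}$. Combining these two estimates gives
\[
\prod_{j=0}^{q_{n_i}-1}|f(\theta+j\alpha)|\ge 2^m\,q_{n_i}^{-mC}\cdot\frac{e^{q_{n_i}(\delta-\varepsilon/2)}}{q_{n_i+1}}.
\]
Finally, since $mC\ln q_{n_i}/q_{n_i}\to 0$, for $n_i$ large enough the polynomial factor $q_{n_i}^{-mC}$ is absorbed into $e^{-q_{n_i}\varepsilon/2}$, delivering the claimed bound $e^{q_{n_i}(\delta-\varepsilon)}/q_{n_i+1}$.

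There is no real obstacle here; the only mild subtlety is that the $m$ poles $\theta_l$ have to be treated simultaneously, and the subsequence $\{q_{n_i}\}$ along which all the relevant lower bounds hold is already the one delivered by Lemma \ref{minimal} (the choice made there was engineered precisely so that every $\|q_{n_i}(\theta-\theta_l)\|$ is good enough, not just their product). Once that is noted, the argument is just bookkeeping of constants and the observation that $\log q_{n_i}=o(q_{n_i})$.
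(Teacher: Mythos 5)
Your proposal is correct and follows essentially the same route as the paper: factor $|f|$ into the $m$ sine products, control the off-minimum factors via Lemma \ref{lana}, control the product of minimum factors via Lemma \ref{minimal} along its subsequence, and absorb the polynomial factor $q_{n_i}^{-mC}$ using $\ln q_{n_i}=o(q_{n_i})$. The only cosmetic difference is that you distribute the factors of $2$ term by term while the paper pulls out $2^{mq_{n_i}}$ up front; the bookkeeping is identical.
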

 
 \begin{proof}
\begin{align*}
\prod_{j=0}^{q_{n_i}-1}|f(\theta+j\alpha )|
&=2^{m q_{n_i}}\prod_{l=0}^m \prod_{j=0}^{q_{n_i}-1}| \sin \pi (\theta-\theta_l+j\alpha)| \\
&=2^{m q_{n_i}}\left( \prod_{l=0}^m \prod_{j=0,j\neq j_l}^{q_{n_i}-1}| \sin \pi (\theta-\theta_l+j\alpha)| \right) \cdot \left( \prod_{l=0}^m |\sin \pi (\theta-\theta_l+j_l\alpha)| \right).
\end{align*}

Combining Lemma $\ref{lana}$ and $\ref{minimal}$,
$$\prod_{j=0}^{q_{n_i}-1}|f(\theta+j\alpha )| \geq 2^{mq_{n_i}} e^{m(-C \ln q_{n_i} -(q_{n_i}-1)\ln 2)} \cdot \frac{e^{q_{n_i}(\delta-\frac{\varepsilon}{2})}}{q_{n_i+1}} \geq \frac{e^{q_{n_i}(\delta-\varepsilon)}}{q_{n_i+1}}.$$

 \end{proof} $\hfill{} \Box $

\section{proof of lemma \ref{A}}
 
We give a detailed proof of (\ref{Ainverse}). (\ref{Asquare}) could be proved in a similar way.
%Ainverse
\begin{proof}
Let $$A^{-1}(x)=\frac{1}{f(x)} \left( \begin{array}{cc} 0 & f(x) \\ -f(x)& Ef(x)-g(x)  \end{array} \right) \triangleq \frac{F(x)}{f(x)}. $$
Consider $$\Psi_{n_i}=\left( A_{q_{n_i}}^{-1} (\theta)-A_{q_{n_i}}^{-1}(\theta- q_{n_i} \alpha)\right) \left( \begin{array}{cc}\varphi_0 \\ \varphi_{-1}  \end{array} \right).$$

For simplicity let us introduce some notations: fixing $\theta$, for any function $z(x)$ on $\T$ denote $z_j=z(\theta+j\alpha)$; for any matrix function $M(x)$ denote $M^j=M(\theta+j\alpha)$.
Then, by telescoping,
   \begin{align*}
     \Psi_{n_i}
   =& 
     \left( \frac{F^0}{f_0} \frac{F^1}{f_1} \cdots \frac{F^{q_{n_i}-1}}{f_{q_{n_i}-1}} - \frac{F^{-q_{n_i}}}{f_{-q_{n_i}}} \cdots \frac{F^{-1}}{f_{-1}} \right) 
      \left( \begin{array}{cc} \varphi_0 \\ \varphi_{-1}  \end{array} \right) \\
   =& 
     \sum_{j=0}^{q_{n_i}-1} \left( \frac{F^0 F^1 \cdots F^{j-1}}{f_0 f_1\cdots f_{j-1}} \right)
     \left( \frac{F^j}{f_j} -\frac{F^{-q_{n_i}+j}}{f_{-q_{n_i}+j}}\right)
     \left( \frac{F^{-q_{n_i}+j+1} \cdots F^{-1}}{f_{-q_{n_i}+j+1}\cdots f_{-1}} \right)   \left( \begin{array}{cc} \varphi_0 \\ \varphi_{-1}  \end{array} \right),
     \end{align*} 
  where for $j=0$ the first, and for $j=q_{n_i}-1$ the last, multiple are set to be equal to one.
  
  Thus\begin{align*}
\Psi_{n_i}
   =&
     \sum_{j=0}^{q_{n_i}-1} \left( \prod_{l=0}^{j-1}\frac{F^l}{f_l} \right)
     \left( \frac{F^{j}}{f_{j}} -\frac{F^{-q_{n_i}+j}}{f_{-q_{n_i}+j}}\right)
     \left( \begin{array}{cc}\varphi_{-q_{n_i}+j+1} \\ \varphi_{-q_{n_i}+j} \end{array} \right)\\
   =& 
     \sum_{j=0}^{q_{n_i}-1} \left( \prod_{l=0}^{j-1}\frac{F^l}{f_l} \right)
     \left( \frac{F^{j}f_{-q_{n_i}+j}-F^{-q_{n_i}+j}f_{-q_{n_i}+j} +F^{-q_{n_i}+j}f_{-q_{n_i}+j} - F^{-q_{n_i}+j}f_{j}}{f_{j}f_{-q_{n_i}+j}}\right)
     \left( \begin{array}{cc} \varphi_{-q_{n_i}+j+1} \\ \varphi_{-q_{n_i}+j}  \end{array} \right)  \\
   =& 
     \sum_{j=0}^{q_{n_i}-1} \left( \prod_{l=0}^{j-1}\frac{F^l}{f_l} \right) 
     \left( \frac{F^{j}-F^{-q_{n_i}+j}}{f_{j}} \left( \begin{array}{cc} \varphi_{-q_{n_i}+j+1} \\    \varphi_{-q_{n_i}+j} \end{array} \right) 
    +\frac{f_{-q_{n_i}+j}-f_{j}}{f_{j}}\left( \begin{array}{cc} \varphi_{-q_{n_i}+j} \\ \varphi_{-q_{n_i}+j-1} \end{array} \right) \right).
\end{align*}

Since $\phi$ is decaying solution, there exists a constant $C>0$ such that $$\| \left( \begin{array}{cc}\varphi_{k} \\ \varphi_{k-1} \end{array} \right)\| \leq C.$$

Observe that $sup_{\theta} \|F(\theta+q_{n_i}\alpha)-F(\theta)\|<\frac{C}{q_{n_i+1}}$.
Now we can get, using Lemma \ref{LcontrolA}, Remark \ref{partf} and
Lemma \ref{f}  in the second inequality
\begin{align*}
       &\|\left( A_{q_{n_i}}^{-1} (\theta)-A_{q_{n_i}}^{-1}(\theta- q_{n_i} \alpha)\right) \left( \begin{array}{cc}\varphi_0 \\ \varphi_{-1}  \end{array} \right)\|\\
\leq &C\sum_{j=0}^{q_{n_i}-1}\frac{\|\prod_{l=0}^{j-1}F^l\|}{q_{n_i+1}|\prod_{l=0}^{j}f_l|}\\
=     &C\sum_{j=0}^{q_{n_i}-1}\frac{\|\prod_{l=0}^{j-1}F^l \|  |\prod_{l=j+1}^{q_{n_i}-1}f_l |} {q_{n_i+1}|\prod_{l=0}^{q_{n_i}-1}f_l|}\\
\leq &C\frac{q_{n_i} e^{q_{n_i}(L(E)+\varepsilon)} \cdot e^{q_{n_i} \varepsilon}}{e^{q_{n_i} (\delta-\varepsilon)}}\\
\leq & e^{q_{n_i} (L(E)-\delta+4\varepsilon)}.
\end{align*}
\end{proof} $\hfill{} \Box$

 \section*{Acknowledgement}
 F.Y would like to thank Rui Han for his help, useful discussions and encouragement throughout all the work. We also would like to thank Qi Zhou and Wencai Liu for some suggestions.
 F.Y was supported by CSC of China (no.201406330007) and the NSFC
 (no.11571327) and NSF of Shandong Province (grant
 no.ZR2013AM026). She would like to thank her advisor Daxiong Piao
 (Professor at Ocean University of China) for supporting her partly. This research was partially supported by NSF DMS-1401204.

\bibliographystyle{amsplain}

\end{document}